\newtheorem{thm}{Theorem}[section]
\newtheorem{conj}{Conjecture}[section]
\newtheorem{prop}[thm]{Proposition}
\newtheorem*{definition*}         {Definition}
\newtheorem{lemma}[thm]{Lemma}
\theoremstyle{remark}
\newcommand*{\Q}{\mathbb{Q}}
\newcommand*{\Hh}{\mathbb{H}}
\newcommand*{\Z}{\mathbb{Z}}
\newcommand*{\R}{\mathbb{R}}
\newcommand*{\C}{\mathbb{C}}
\newcommand*{\PP}{\mathbb{P}}
\newcommand*{\FF}{\mathcal{F}}
\newcommand*{\Vol}{\textrm{Vol}}
\newcommand*{\Mob}{\textrm{Mob}}
\newcommand*{\ra}{\rightarrow}
\def\SL{{\rm SL}}
\def\sl{{\rm sl}}
\def\n{{\rm n}}
\author{Jonathan Pila and Jacob Tsimerman}
\address{Mathematical Institute, University of Oxford, UK}
\address{Department of Mathematics, University of Toronto, Canada}
\begin{document}

\begin{abstract}

In this paper we prove a functional transcendence statement for the
$j$-function which is
an analogue of the Ax-Schanuel theorem
%(Ax \cite{Ax1})
for the exponential function. It asserts, roughly, that
atypical algebraic relations among functions and
their compositions with the
$j$-function are governed by modular relations.
%This generalizes the Ax-Lindemann-Weirstrass theorem
%proved by Pila in this context.
%and is one of the ingredients required for the Zilber-Pink
%conjecture regarding unlikely intersections.

\end{abstract}

\title{Ax-Schanuel for the $j$-function}
\maketitle

\centerline{\it To Peter Sarnak on the occasion of his 61st birthday}

\section{Setup and main theorem}

Let $j:\Hh\rightarrow \C$ denote the classical modular function,
or {\it $j$-function\/}, by means of which the quotient $Y(1)$
of the complex upper-half plane $\Hh$ by the modular group
is identified with $\C$. Here $Y(1)$ is the moduli space of elliptic curves
over $\C$ up  to isomorphism. Contrary to some conventions, we shall consider $\Hh$ as an open
subset of $\PP^1(\C)$ rather than of $\C$. This is more natural for us since $\SL_2(\C)$ acts on $\PP^1(\C)$. From now on,
we write $\PP^1$ to denote $\PP^1(\C)$ for brevity.
%Let $\C\simeq Y(1)$ denote the Moduli space of complex elliptic curves,
%and $j:\Hh^n\rightarrow Y(1)^n$ be the usual projection map.
By abuse of notation, we shall refer to the cartesian powers of this map
as $j$ regardless of how many variables are involved.
For a connected open domain $D$ of a complex algebraic variety $V$, We will refer to a complex analytically
irreducible component  of the intersection of an algebraic variety of $V$
with $B$ as an {\it algebraic subvariety\/} of $B$. Let $\Gamma$ denote the
graph of the $j$ function in $\Hh^n\times Y(1)^n$. The definition of a
{\it weakly special subvariety\/}
is given in \S2. A first version of our result is then the following.

\begin{thm}[Ax-Schanuel for $j(z)$]\label{main0}

Let $V\subset (\PP^1)^n\times Y(1)^n$ be an algebraic subvariety,
and let $U$ be an irreducible component of $V\cap \Gamma$.
If the projection of $U$ to $Y(1)^n$ is not contained in a proper
weakly special subvariety, then $\dim V = \dim U  +n$.
\end{thm}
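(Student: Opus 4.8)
The plan is to combine o-minimality and the Pila--Wilkie counting theorem with the known Ax--Lindemann theorem for the $j$-function. First I would replace $V$ by the Zariski closure of $U$, so that $V$ is irreducible and the failure of $\dim V = \dim U + n$ means precisely that $\dim U \ge \dim V - n + 1$, i.e.\ that $U$ is an \emph{atypical} component of $V \cap \Gamma$; the goal becomes to show that the Zariski closure of the projection $\pi(U)\subset Y(1)^n$ is a proper weakly special subvariety. Let $W \subset \Hh^n$ be the image of $U$ under the (graph) projection to $\Hh^n$, so $\dim W = \dim U$ and $\pi(U) = j(W)$, and let $W^\ast$ be the smallest algebraic subvariety of $\Hh^n$ containing $W$. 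If $W^\ast \ne \Hh^n$, then $\dim W^\ast \le n-1$, and Ax--Lindemann for the $j$-function shows that the Zariski closure of $j(W^\ast)$ is weakly special, of dimension $\le n-1$; it contains $\pi(U)$, so the hypothesis of the theorem fails and there is nothing to prove. Hence it suffices to treat the case in which $W$ is Zariski dense in $(\PP^1)^n$, and there one must derive a contradiction from the assumption that $U$ is atypical while $\pi(U)$ lies in no proper weakly special subvariety.

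For this I would pass to the standard fundamental domain $\FF \subset \Hh$ for $\SL_2(\Z)$, on which $j$, and hence $\Gamma \cap (\FF^n \times Y(1)^n)$, is definable in $\R_{\mathrm{an},\exp}$ (Peterzil--Starchenko). For $\gamma \in \SL_2(\Z)^n$ acting coordinatewise on the $(\PP^1)^n$-factor, $\SL_2(\Z)^n$-invariance of $j$ gives $\gamma\cdot U = \{(z,j(z)) : z\in\gamma W\}\subset (\gamma\cdot V)\cap\Gamma$, an analytic set of dimension $\dim U$, hence lying in an atypical component of $(\gamma\cdot V)\cap\Gamma$; moreover $\{ g\cdot V : g\in\SL_2(\R)^n \}$ is a definable family. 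So the set $\Sigma$ of $g\in\SL_2(\R)^n$ for which $(g\cdot V)\cap\Gamma\cap(\FF^n\times Y(1)^n)$ has a component of dimension $\ge \dim U$ is definable and contains all relevant $\gamma$. Using the hyperbolic volume estimates underlying the proof of Ax--Lindemann for $j$ (Hwang--To; Pila), together with the fact that $U$ — being a component of $V\cap\Gamma$ — is a closed, non-compact, hence unbounded analytic set whose Zariski-dense leaf $W$ must spread over polynomially many fundamental domains, I would show $\#\{\gamma\in\Sigma\cap\SL_2(\Z)^n : H(\gamma)\le T\}\gg T^{\delta}$ for some $\delta>0$. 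The Pila--Wilkie theorem then produces a connected positive-dimensional semialgebraic $C\subset\Sigma$ containing infinitely many points of $\SL_2(\Z)^n$; after a translation I may assume $C$ is an algebraic arc through the identity, generating a non-trivial connected algebraic subgroup $H\subset\SL_2(\R)^n$.

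Finally I would run the bootstrap. For $g\in C$ choose an atypical component $\Omega_g$ of $(g\cdot V)\cap\Gamma$ with $\Omega_{\mathrm{id}}\supseteq U$; if the translates $g^{-1}\cdot\Omega_g$ genuinely move as $g$ ranges over $C$, their union has Zariski closure of dimension $>\dim U$ and is still (one checks) atypical in its intersection with $\Gamma$, so I replace $U$ by this larger component and repeat. Since dimensions are bounded the process terminates, and at the terminal stage $V$ is invariant under $H$ acting on the $(\PP^1)^n$-factor. I would then argue that an $H$-invariant variety dominating $(\PP^1)^n$ and carrying an atypical $\Gamma$-component forces the Zariski closure of its $Y(1)^n$-projection into a proper weakly special subvariety, contradicting the standing hypothesis. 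I expect the two pressure points to be: (i) the volume/counting lower bound $\gg T^{\delta}$, where the special geometry of $\Hh^n$ and the growth of $j$ toward the cusps is used, and which may be cleanest to establish first for the graph of $(j,j',j'')$ so that the third-order Schwarzian differential equation satisfied by $j$ controls the dimension count, and (ii) the concluding step, where the invariance under $H$ — a product of diagonally embedded copies of $\mathrm{PSL}_2$ and fixed points — must be converted into an honest modular relation on $\pi(U)$, essentially a second, structural application of Ax--Lindemann-type reasoning. Step (ii) is the one I expect to be the main obstacle.
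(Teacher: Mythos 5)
Your overall architecture does track the paper's strategy (definable family of translates, Hwang--To volume growth against a uniform volume bound per fundamental domain, Pila--Wilkie, a stabilizer bootstrap, then an endgame), but there are genuine gaps. First, your opening reduction is false: Ax--Lindemann says that the Zariski closure of $j(W^\ast)$ is weakly special, but it does \emph{not} bound its dimension by $\dim W^\ast$. For instance, if $W^\ast$ is a non-geodesic algebraic curve in $\Hh^2$, the Zariski closure of $j(W^\ast)$ is all of $Y(1)^2$ (were it a proper weakly special $X$, then $W^\ast$ would be an algebraic curve inside $j^{-1}(X)$, whose components are one-dimensional weakly specials, forcing $W^\ast$ itself to be weakly special). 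So ``$W^\ast\neq\Hh^n$ implies the hypothesis fails'' is wrong, and the case where $W$ is not Zariski dense yet $\pi(U)$ lies in no proper weakly special is simply not covered by your argument. Relatedly, your set $\Sigma$ records only a dimension condition, so nothing guarantees that the components produced during the bootstrap still project outside every proper weakly special; the paper builds exactly this into its definable set $Z$ via the condition on the smallest Mobius subvariety $U_1^{\Mob}$.

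Second, your bootstrap dichotomy is incomplete and the endgame is missing. If the chosen components $g^{-1}\cdot\Omega_g$ do not move as $g$ ranges over $C$, you may conclude only that $U\subset g\cdot V$ for all $g\in C$, \emph{not} that $V$ is $H$-invariant: this is the paper's case (ii), which is handled by replacing $V$ with $V\cap c\cdot V$ (lowering $\dim V$ while keeping $U$) and running an upward induction on $\delta=\dim V-\dim U$ whose base case $\delta=0$ is Ax--Lindemann; your proposal has no mechanism and no induction for this case, so the claimed ``terminal invariance under $H$'' does not follow. Finally, the step you yourself defer as the main obstacle is the heart of the matter, and your sketch of it is off target: the Zariski closure of the integer points of the stabilizer need not be a product of diagonally embedded copies of $\SL_2$ and trivial factors --- it can have unipotent and toric parts, and most of \S5 of the paper is devoted to these (splitting of coordinates, comparing strips with hyperbolic balls via a second application of Hwang--To, and Peterzil--Starchenko's o-minimal Chow theorem to conclude that $\pi_{Y(1)^n}(U)$ is algebraic). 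Even then the conclusion is reached not by ``Ax--Lindemann-type reasoning'' but by the Andr\'e--Deligne monodromy theorem, which gives Zariski density of the monodromy and hence $\SL_2(\C)^n$-invariance of $V=U^{\rm zar}$, from which the dimension count follows. (For comparison, the paper does not prove Theorem 1.1 directly at all: it proves the jet-space Theorem 1.2 by the above scheme and deduces 1.1 from it via the differential formulation 1.3; a direct proof of 1.1 along your lines is plausible, but only with the missing case, the preserved weakly-special condition, and the \S5--\S6 endgame supplied.)
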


If we replace $j:\Hh^n\rightarrow \C^n$ by $\exp: \C^n\rightarrow (\C^\times)^n$,
the cartesian power of the exponential function, and take weakly special
subvarieties in $(\C^\times)^n$ to be cosets of algebraic subtori, then this
is a formulation of ``Ax-Schanuel'' for the exponential function,
as may be found in Ax \cite{Ax2}. The more usual statement of
Ax's theorem \cite{Ax1} is in the setting of a differential field.
We will give a statement in that setting below, though in a stronger form.
The connection between the two settings is afforded by the Seidenberg
embedding theorem, by means of which differential fields of
characteristic zero
may be embedded in fields of germs of meromorphic functions. We indicate
in \S2 how to deduce the differential version of our theorem from the
complex version.

\bigbreak

Theorem 1.1 generalises the ``Ax-Lindemann'' theorem for $j(z)$
established  in \cite{P1} in connection with the Andr\'e-Oort conjecture for
$Y(1)^n$.
We expect that Theorem 1.1 will be useful in connection with the broader
Zilber-Pink conjecture for $Y(1)^n$: a preliminary application is
given in section 7.
On the Zilber-Pink conjecture generally see \cite{BMZ, Pn1, Pn2, Z, Zi}.
A special case of Theorem 1.1 (``Modular Ax-Logarithms'') is used by
Habegger-Pila \cite{HP1} in affirming a very special case of the
Zilber-Pink conjecture
for $Y(1)^n$. A more general result (but less general than 1.1) appears as
one of two hypotheses in a conditional affirmation of Zilber-Pink for $Y(1)^n$
in \cite{HP2}.  Related functional transcendence results in connection with
Zilber-Pink have also appeared in the work of
Betrand-Masser-Pillay-Zannier \cite{MZ, BMPZ} on ``torsion anomalous'' points.

However, we want to frame a stronger result that is
a transcendence theorem not just for $j(z)$
but also for $j'(z)$ and $j''(z)$. Including further derivatives gains
nothing as $j'''(z)$ is algebraic over $j(z),j'(z)$ and $j''(z)$ (see \S2.4).
To state a theorem encoding the derivatives geometrically we need
the concept of a \emph{jet space}.

\begin{definition*} For a complex analytic manifold $M$ and a positive integer
$k$, the $k$'th jet space $J_kM$ is a fiber bundle over $M$ whose fiber
over $m\in M$ consists of equivalence classes of germs of
holomorphic maps from a neighborhood of $0\in\C$ to $M$ taking
$0$ to $m$, where we declare 2 such maps equivalent if they agree up
to order $k$. For example, $J_1M$ is the tangent bundle of $M$.
\end{definition*}

A map $M\rightarrow N$ induces naturally a map $J_kM\rightarrow J_kN$.
We let $J^0_kM\subset J_kM$ denote the sub-space consisting of germs of
functions with vanishing first derivative. If $M$ is an algebraic variety, then
$J_kM$ can be naturally given such a structure as well. For a complex
open disc $D$ parameterized by a holomorphic variable $z$, we can
give local co-ordinates for $J_k D\cong D\times \C^k$ by letting
$(z,a_1,a_2,..,a_k)$ denote the germ at $0$ of the function
$$f(w)=z+\sum_{i=1}^k a_kw^k/k!.$$ In these co-ordinates, $J^0_kM$
is locally defined by the equation $a_1=0$.
An {\it algebraic subvariety\/} of  $J_k\Hh^n\times J_kY(1)^n$ will again mean
a complex analytically irreducible component of the intersection with
$J_k\Hh^n\times J_kY(1)^n$ with an algebraic subvariety in the ambient complex
affine space.
For $k\ge 0, n\ge 1$ let $J_kj:J_k\Hh^n\rightarrow J_kY(1)^n$ denote
the map induced by $j$,
and let $\Gamma^n_k$ denote the graph of this map. When $k,n$ are fixed we
will just refer to the graph as $\Gamma$. We

\begin{thm}[Ax-Schanuel with derivatives for $j(z)$]\label{main}

Let $V\subset J_2\Hh^n\times J_2Y(1)^n$ be an algebraic subvariety,
and let $U$ be an irreducible component of $V\cap \Gamma$.
If the projection of $U$ to $Y(1)^n$ is not contained in a weakly special
subvariety, and none of the projections of $U$ to $J_2\Hh$ are contained
inside $J^0_2\Hh$, then $\dim V = \dim U  + 3n$.
\end{thm}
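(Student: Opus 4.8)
The plan is to argue by contradiction, following the Pila--Zannier strategy adapted to functional transcendence: combine o-minimality and point-counting with a geometric estimate in the hyperbolic plane. So suppose the non-degeneracy hypotheses hold --- the projection of $U$ to $Y(1)^n$ lies in no proper weakly special subvariety, and no projection of $U$ to $J_2\Hh$ lies in $J^0_2\Hh$ --- yet the intersection is atypical, i.e. $\dim V \le \dim U + 3n - 1$ (one always has the reverse inequality since $\Gamma$ has dimension $3n$ in the $6n$-dimensional ambient space). Write $\mathcal{D}\subset\Hh$ for a fundamental domain for $\SL_2(\Z)$ and $\mathcal U$ for the projection of $U$ to $J_2\Hh^n$, an analytically irreducible component of $S:=\{x\in J_2\Hh^n:(x,J_2j(x))\in V\}$ with $\dim\mathcal U=\dim U=:d$, and let $W\subseteq J_2Y(1)^n$ be the Zariski closure of the projection of $U$ to $J_2Y(1)^n$, so $\dim W\le d$. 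First I would make the usual reductions: shrink $V$ to the smallest algebraic subvariety with $U\subseteq V\cap\Gamma$, pass to the Zariski closure of $\mathcal U$ in $J_2(\PP^1)^n$ and induct on $n$ so as to reduce to the case where $\mathcal U$ is Zariski dense (any proper algebraic subvariety containing $\mathcal U$ being handled inductively), and record that, because $j\circ\gamma=j$ as functions, the map $J_2j$ is invariant under the jet-lifted action of $\SL_2(\Z)^n$ on $J_2\Hh^n$; hence the set $\{x:J_2j(x)\in W\}$ is $\SL_2(\Z)^n$-invariant and contains $\mathcal U$.

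Next I would bring in o-minimality. On $\mathcal{D}^n$ and its natural lift to $J_2\Hh^n$ the map $J_2j$ is definable in $\R_{\mathrm{an},\exp}$, so $Z:=\{x\in J_2\mathcal{D}^n:J_2j(x)\in W\}$ is definable and every translate of $\mathcal U$ into $J_2\mathcal{D}^n$ lands in $Z$. I would then form the definable ``parameter set''
$$\mathcal{G}=\{\,g\in\SL_2(\R)^n:\ (g\cdot_1 V)\cap\Gamma\cap\bigl(J_2\mathcal{D}^n\times J_2Y(1)^n\bigr)\ \text{has a component of dimension}\ \ge d\,\},$$
where $g\cdot_1$ denotes the (birational, hence algebraic) action of $g$ on the $J_2\Hh^n$-factor alone; $\mathcal{G}$ is definable because it is a condition on the fibres of an algebraic family intersected with a fixed definable set, and, using the $\SL_2(\Z)^n$-invariance above together with the fact that a component of the open piece $(\gamma\mathcal U)\cap J_2\mathcal{D}^n$ still has dimension $d$, it contains every $\gamma\in\SL_2(\Z)^n$ for which $\gamma\mathcal U$ meets $J_2\mathcal{D}^n$.

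The core of the argument is then a lower bound: the number of $\gamma\in\SL_2(\Z)^n$ with $H(\gamma)\le T$ lying in $\mathcal{G}$ should grow at least like a fixed positive power of $T$. This is where the hypothesis that no projection of $\mathcal U$ lies in $J^0_2\Hh$ is essential: it forces $\mathcal U$ to be genuinely transversal, and a volume/metric estimate in $\Hh^n$ --- bounding below, in terms of hyperbolic size, how many $\SL_2(\Z)^n$-translates of $\mathcal{D}^n$ of height $\le T$ a positive-dimensional analytic piece of $\mathcal U$ must meet --- yields the polynomial lower bound. Granting this, the Pila--Wilkie theorem forces $\mathcal{G}$ to contain a connected semialgebraic set of positive dimension through many of these $\gamma$; bootstrapping with real-analyticity and passing to the group it generates, I obtain a positive-dimensional algebraic subgroup $H\subseteq\SL_2(\R)^n$ with $H\cdot\mathcal U\subseteq\{x:J_2j(x)\in W\}$, and after enlarging and taking Zariski closures an $H$-invariant algebraic subvariety $\mathcal U'\subseteq J_2\Hh^n$ with $J_2j(\mathcal U')\subseteq W$. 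Now the situation is algebraic, and the Ax--Lindemann theorem for $j$ of \cite{P1}, together with its extension to the jet setting proved by the same method, the classification of algebraic subgroups of $\SL_2(\R)^n$, and the description of weakly special subvarieties from \S2, shows that the positive-dimensional stabilizer $H$ forces $W$ --- hence the projection of $U$ to $Y(1)^n$ --- into a proper weakly special subvariety, contradicting the non-degeneracy hypothesis. This gives $\dim V=\dim U+3n$; the differential-algebraic version follows via the Seidenberg embedding theorem as in \S2.

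The main obstacle I anticipate is precisely the geometric lower bound on the count: one must quantify uniformly how a positive-dimensional \emph{analytic} (not algebraic) component $\mathcal U$ distributes over $\SL_2(\Z)^n$-translates of the fundamental domain ordered by height, which requires a new estimate in the hyperbolic geometry of $\Hh^n$ and careful use of the $J^0_2\Hh$ exclusion to rule out degenerate directions along which $\mathcal U$ might fail to spread.
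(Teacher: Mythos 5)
Your overall framework (a definable family over $\SL_2(\R)^n$, Hwang--To-type volume growth to produce many integer points, Pila--Wilkie to extract semialgebraic sets) is indeed the skeleton of the paper's argument, but there are two genuine gaps. First, the passage from a positive-dimensional semialgebraic set in your $\mathcal{G}$ to a positive-dimensional \emph{stabilizer} $H$ of $V$ is not available by ``bootstrapping with real-analyticity'': a semialgebraic curve $C\subset\mathcal{G}$ only tells you that each $c\cdot V$ meets $\Gamma$ in a component of dimension $d$, not that $c\cdot V=V$. The paper must split into three cases --- $c\cdot V$ varying and not containing a fixed component (replace $V,U$ by $C\cdot V$ and the swept-out component, raising both dimensions by one), $c\cdot V$ varying but containing $U_{c_0}$ (replace $V$ by $V\cap c\cdot V$, lowering $\dim V$ while keeping $\dim U$), or $c\cdot V$ constant (then $C$ lies in a coset of the stabilizer) --- and runs a double induction, upward on $\delta=\dim V-\dim U$ and downward on $\dim U$, whose base case $\delta=0$ is Ax--Lindemann \emph{with derivatives}. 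Only after exhausting the first two cases may one assume the stabilizer has Zariski-dense integer points in a positive-dimensional group; your proposal skips this reduction entirely.

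Second, and more seriously, your endgame is structurally wrong: a positive-dimensional stabilizer does \emph{not} force the projection of $U$ into a proper weakly special subvariety, so there is no contradiction available at that point (when $V$ is $\SL_2(\C)^n$-invariant the hypotheses can perfectly well hold and the theorem's conclusion is then \emph{true}, not absurd). What the paper does with $G$, the Zariski closure of the stabilizer's integer points, is a further delicate argument: it analyzes the Lie algebra of $G$ (splitting coordinates into semisimple and unipotent parts), performs another round of counting (the Case A/Case B dichotomy with the boxes $S_n(R)$ and balls $B_r(M\ln R)$, using Hwang--To again) to show that $\pi_{Y(1)^n}(U)$ is a closed, definable, complex-analytic subset of affine space, hence algebraic by the Peterzil--Starchenko o-minimal Chow theorem; then it invokes the Andr\'e--Deligne monodromy theorem: since $\pi_{Y(1)^n}U$ is algebraic and not contained in a weakly special subvariety, its monodromy is Zariski dense in $\SL_2(\C)^n$, so $V=U^{\rm zar}$ is $\SL_2(\C)^n$-invariant, and finally transitivity of $\SL_2(\C)$ on $J_2\PP^1\setminus J_2^0\PP^1$ --- this is where the $J^0_2\Hh$ hypothesis actually enters, not in the volume/counting step as you suggest --- shows the fibers of $V\to J_2Y(1)^n$ are $3n$-dimensional, yielding $\dim V=\dim U+3n$ directly. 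Neither the monodromy input nor the o-minimal Chow theorem appears in your proposal, and the counting lower bound you flag as the ``main obstacle'' is exactly what the paper supplies by combining Hwang--To with the uniform upper bound on $\Vol(\gamma\cdot U_1\cap\FF^n)$ obtained from definability of the projection degrees.
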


In fact, the above theorem is equivalent to the following purely
differential-algebraic version.
Let $K$ be a characteristic 0 differential field with $m$ commuting derivations
$D_i$. Let $C=\bigcap_i\textrm{ker}D_i$ be the constant field of $K$.
In the following theorem,
$F(j, j', j'', j''')=0$
is the algebraic  differential equation
satisfied by $j(z)$ (see \S2.4),
$\Phi_N$ are the {\it modular polynomials\/},
and  the {\it rank\/} of a matrix is over $K$.

\begin{thm}\label{main2}
Let $z_i,j_i,j'_i,j''_i, j_i'''\in K^\times$, $i=1,\ldots, n$,
%be non-zero elements of $K$
such that
$$D_kj_i=j_i'D_kz_i,\quad D_kj'_i=j_i''D_kz_i, {\rm\ and\ } D_kj_i''=j_i'''D_kz_i$$
for all $i$ and $k$.
Suppose further that $F(j_i,j'_i,j''_i,j_i''')=0$ for $i=1,\ldots, n$,
and that $\Phi_N(j_i,j_k)\neq 0$ for all $i, k, N$ and $j_i\notin C$ for all $i$.
Then
$${\rm tr. deg.}_{C}C(z_1,j_1,j'_1,j''_1,\ldots, z_n,j_n, j_n',j''_n)\geq 3n +
{\rm rank}\left(D_k z_i\right)_{i,k}.$$
\end{thm}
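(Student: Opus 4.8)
The plan is to derive Theorem~\ref{main2} from the geometric Theorem~\ref{main} by way of Seidenberg's embedding theorem, in close parallel with the passage from Ax's geometric theorem to his differential version in the exponential case. First I would pass to functions. Let $K_0\subseteq K$ be the differential subfield generated by $z_i,j_i,j'_i,j''_i$; it is visibly finitely generated as a differential field and contains every $j'''_i$ (write $j'''_i=D_kj''_i/D_kz_i$ for a $k$ with $D_kz_i\ne 0$, which exists since $j_i\notin C$). By Seidenberg's theorem $K_0$ embeds into the field of germs of meromorphic functions at a point of $\C^m$ with $D_k=\partial/\partial t_k$; after restricting to a small polydisc $\Delta$ I may assume $z_i,j_i,j'_i,j''_i$ holomorphic there. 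A field embedding does not change ${\rm rank}(D_kz_i)_{i,k}$, and enlarging the constant field from $C$ to $\C$ can only lower the transcendence degree over the constants, so it suffices to prove the statement with $K$ this function field and $C=\C$. Note that no $z_i$ is constant: otherwise all $D_kz_i=0$, hence $j_i$ is constant, i.e.\ $j_i\in C$, which is excluded; so each $z_i$ is non-constant with open image in $\C$, and after a further shrink $dz_i\ne 0$ on all of $\Delta$.

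Next I would realise the given data inside the graph $\Gamma$. Fix $i$. From $dj_i=j'_i\,dz_i$, $dj'_i=j''_i\,dz_i$, $dj''_i=j'''_i\,dz_i$ and $dz_i\ne 0$ I may, shrinking $\Delta$, write $j_i=h_i(z_i)$, $j'_i=h'_i(z_i)$, $j''_i=h''_i(z_i)$, $j'''_i=h'''_i(z_i)$ for a holomorphic function $h_i$ on the disc $D_i:=z_i(\Delta)$, and then $F(j_i,j'_i,j''_i,j'''_i)=0$ says $F(h_i,h'_i,h''_i,h'''_i)\equiv 0$ on $D_i$: the function $h_i$ solves the third-order (Schwarzian) differential equation satisfied by $j$. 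The classical input is that this equation, being invariant under precomposition with M\"obius maps, has every $j\circ g$ with $g\in\mathrm{PSL}_2(\C)$ as a solution, and that — putting it in solved form for $y'''$ away from the finitely many bad points and counting the three parameters — these are all of its local solutions. Since $h_i$ is non-constant with $h_i\ne 0$, $h'_i\ne 0$ and $h_i\not\equiv 1728$ (the last again because $j_i\notin C$), I get $h_i=j\circ g_i$ for a fixed $g_i\in\mathrm{PSL}_2(\C)$, so $g_i$ carries $D_i$ into $\Hh$. Put $w_i:=g_i(z_i)\colon\Delta\to\Hh$; then $j_i=j(w_i)$, $j'_i=j'(w_i)g'_i(z_i)$ and $j''_i=j''(w_i)g'_i(z_i)^2+j'(w_i)g''_i(z_i)$, which is precisely the statement that $(j_i,j'_i,j''_i)$ equals $J_2j$ applied to the $2$-jet $(w_i,g'_i(z_i),g''_i(z_i))\in J_2\Hh$. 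Hence the holomorphic map
$$\Phi\colon\Delta\longrightarrow J_2\Hh^n\times J_2Y(1)^n,\qquad t\longmapsto\bigl((w_i(t),g'_i(z_i(t)),g''_i(z_i(t)))_i,\ (j_i(t),j'_i(t),j''_i(t))_i\bigr)$$
takes values in the graph $\Gamma$.

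Finally I would run the dimension count and apply Theorem~\ref{main}. Choose a generic $t_0\in\Delta$ and let $W$ be the germ at $\Phi(t_0)$ of the image of $\Phi$; since $\Phi$ factors through $(z_1,\dots,z_n)$ by a generically injective map (one recovers $z_i=g_i^{-1}(w_i)$), $W$ is an irreducible analytic germ of dimension equal to the dimension of the image of $(z_1,\dots,z_n)$, namely ${\rm rank}(D_kz_i)_{i,k}=:r$. Let $V$ be the component through $W$ of the intersection of $J_2\Hh^n\times J_2Y(1)^n$ with the Zariski closure of $W$ in the ambient affine space. As each $g_i$ is a fixed M\"obius transformation over $\C$, the coordinate functions restricted to $W$ generate exactly the field $\C(z_i,j_i,j'_i,j''_i\colon i)$ (since $w_i,g'_i(z_i),g''_i(z_i)\in\C(z_i)$ and $z_i\in\C(w_i)$), so $\dim V={\rm tr.deg}_\C\,\C(z_1,j_1,j'_1,j''_1,\dots,z_n,j_n,j'_n,j''_n)=:d$ — the quantity to be bounded below. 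Let $U$ be the irreducible component of $V\cap\Gamma$ containing $W$; then $\dim U\ge\dim W=r$. The hypotheses of Theorem~\ref{main} hold for $U$: the middle coordinate of the $i$-th projection of $W$ to $J_2\Hh$ is $g'_i(z_i)$, which vanishes nowhere, so no projection of $U$ lies inside $J^0_2\Hh$; and the projection of $W$ to $Y(1)^n$ is the germ of $\{(j_i(t))_i\}$, which lies in no proper weakly special subvariety because no $j_i$ is constant and $\Phi_N(j_i,j_k)\ne 0$ for all $N$ and all $i\ne k$. Theorem~\ref{main} now gives $\dim V=\dim U+3n$, so $d=\dim U+3n\ge 3n+r$; undoing the first step yields the claim.

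The deduction is essentially formal bookkeeping, and the true difficulty lies in Theorem~\ref{main} itself. Within the deduction, the only substantive ingredient is the classical description of the solutions of the Schwarzian equation of $j$ as the $\mathrm{PSL}_2(\C)$-translates of $j$ — this is exactly where one uses that $j'''$ is a \emph{rational} function of $j,j',j''$, so that $F=0$ can be written in solved form and the ODE uniqueness theorem applied; the only other point needing a little attention is the Seidenberg reduction, where one must remember that the differential field in play need not have finite transcendence degree over $\Q$ (only finitely many differential generators are required).
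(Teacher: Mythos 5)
Your argument is correct and is essentially the paper's own deduction: a Seidenberg embedding into meromorphic functions, the classical fact that the local solutions of the Schwarzian equation for $j$ are exactly the translates $j\circ g$ with $g\in{\rm PSL}_2(\C)$, and then an application of Theorem~\ref{main} to a component of the Zariski closure of the resulting locus inside $J_2\Hh^n\times J_2Y(1)^n$, with the hypotheses (no constant $j_i$, no modular relation, nonvanishing first jet coordinate) checked exactly as in the paper. The only cosmetic difference is that the paper acts by $g_i\in\SL_2(\C)$ on the $z_i$ so as to assume $j_i=j(z_i)$ and take the jets $(z_i,1,0)$, whereas you keep $z_i$ and encode $g_i$ in the jet coordinates $(g_i(z_i),g_i'(z_i),g_i''(z_i))$; your handling of the Seidenberg/constant-field bookkeeping is, if anything, more explicit than the paper's.
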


We show in \S2 that Theorem 1.2 implies Theorem 1.3 (and {\it vice versa\/}).
The rest of the paper, except section 7, is devoted to the proof of
Theorem \ref{main} and we adopt its hypotheses throughout. Thus
$V\subset J_2\Hh^n\times J_2Y(1)^n$ is an algebraic variety,
and $U$ is an irreducible component of $V\cap \Gamma$.
We assume that the projection of $U$ to $Y(1)^n$ is not contained in a proper
weakly special subvariety, and none of the projections of $U$ to $J_2\Hh$ are
contained inside $J^0_2\Hh$.

Amongst other ingredients, we rely on the theory of o-minimality.
We will say that a set is {\it definable\/} if it is definable in
the o-minimal structure $\R_{\rm an,\ exp}$. For an introduction to this theory, see
\cite{D}; on  $\R_{\rm an,\ exp}$ see \cite{DMM, DM}.
The second author \cite{T} gave a proof of the exponential Ax-Schanuel theorem
using o-minimality.

After some preliminary results are gathered in \S2,
and some initial ``conditioning'' of $V$ is undertaken in \S3,
the main elements of the proof can be summarized as follows.
We study a certain definable
set $Z$ of elements $\gamma\in{\rm SL}_2(\R)^n$ such that $\gamma V$
and a suitable component $U_\gamma$ of $\gamma V\cap \Gamma$
have the same properties as $V$ and $U$.
Volume estimates
of Hwang-To \cite{HT} enable us in \S4 to show that $U$ is present over ``many''
fundamental domains $\gamma \mathcal{F}^n$
for the action of ${\rm SL}_2(\Z)^n$ on
$\Hh^n$. Here $\mathcal{F}$ is the usual fundamental domain
for the action of ${\rm SL}_2(\Z)$ on $\Hh$.
%We count such domains by the naive height of $\gamma$.
Thus $Z$ has ``many'' integer points.
An application of the Counting Theorem of Pila-Wilkie \cite{PW} now shows
that $Z$ contains algebraic curves.
With these we may seek to increase the dimension of $V$ and $U$, or
to reduce the difference $\dim V-\dim U$, which we handle by induction.
Eventually, we reduce to the situation where $V$ has a ``large'' stabilizer.
Studying the  possible structure of the Zariski closure of the integer
points of the stabilizer of $V$ allows us to apply an o-minimal version of
Chow's theorem due to Peterzil-Starchenko \cite{PS1, PS2} to prove
that the image of $U$ in $Y(1)^n$ is algebraic. It is in this step only
that our restriction to powers of the modular curve,
rather than a general Shimura variety, is essential, as $\SL_2(\R)^n$ has
relatively few subgroups. Finally, we use monodromy results
due to Deligne-Andr\'e \cite{An, De} to complete the proof.

Suitable ``Ax-Schanuel''  formulations should  hold for any (mixed) Shimura
variety. In particular for $\mathcal{A}_g$, generalising ``Ax-Lindemann''
\cite{PT}, we expect the following.
\begin{conj}
Let $\pi: \Hh_g\rightarrow \mathcal{A}_g$ be the classical uniformisation
of the moduli space of principally polarised abelian varieties of dimension $g$.
Let $\Gamma$ be the cartesian power of the graph of $\pi$ in
$\Hh_g^n\times \mathcal{A}_g^n$.
Let $V\subset \Hh_g^n\times \mathcal{A}_g^n$
be an algebraic variety and $U$ an irreducible component of $V\cap\Gamma$.
If the projection of $U$
to $\mathcal{A}_g$ is not contained in a proper weakly special subvariety then
$\dim V =\dim U +n\frac{g(g+1)}{2}$.
\end{conj}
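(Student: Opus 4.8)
The proof of such a statement should follow closely the strategy used below for Theorem \ref{main}, with $\SL_2$ replaced by $\Sp_{2g}$ throughout; as in the modular case one would probably first formulate and establish a jet-enhanced version — a transcendence statement for $\pi$ together with its derivatives up to an appropriate order — from which the displayed dimension inequality follows by projection. The necessary o-minimal input is available: by the work of Peterzil--Starchenko \cite{PS1, PS2} the restriction of $\pi$ to a Siegel fundamental set $\mathcal{G}\subset\Hh_g$ for the action of $\Sp_{2g}(\Z)$ is definable in $\R_{\rm an,exp}$, and hence so is $\pi^n$ restricted to $\mathcal{G}^n$. Using this, and after an initial ``conditioning'' of $V$ as in \S3, one introduces the definable set $Z$ of $\gamma\in\Sp_{2g}(\R)^n$ for which $\gamma V$ meets $\mathcal{G}^n$ and a suitable component $U_\gamma$ of $\gamma V\cap\Gamma$ inherits the non-weakly-special and dimension hypotheses imposed on $U$. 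The Ax--Lindemann theorem for $\mathcal{A}_g$ proved in \cite{PT} provides the base case of the ensuing induction, as well as the dictionary between algebraic subvarieties of $\Hh_g^n$ that are weakly special and totally geodesic submanifolds for the Bergman metric.

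The counting step proceeds as in \S4--\S5. One needs a volume estimate in the spirit of Hwang--To \cite{HT} for $\mathcal{A}_g$ — equivalently, control on how the injectivity radius of the Bergman metric decays near the cusps — to conclude that a component $U$ of dimension $d$ whose image is not weakly special must meet a number of translates $\gamma\mathcal{G}^n$ growing at least like a fixed positive power of the relevant height parameter. Thus $Z$ has ``many'' rational (indeed integer) points, and the Pila--Wilkie counting theorem \cite{PW} forces $Z$ to contain a semialgebraic arc through an integer point. Transporting such arcs by the group action and taking Zariski closures either enlarges $V$ or decreases $\dim V-\dim U$, which the induction absorbs, or else — in the terminal case — exhibits a positive-dimensional algebraic subgroup inside the stabilizer of $V$.

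In the terminal case one analyses the Zariski closure $H$ of the integer points of the stabilizer of $V$ inside $\Sp_{2g}(\R)^n$. Here lies the principal difficulty, and the reason the conjecture is substantially harder than Theorem \ref{main}: whereas $\SL_2(\R)^n$ has relatively few subgroups, $\Sp_{2g}(\R)^n$ has a rich supply, so the rather ad hoc argument of the modular case must be replaced by the structure theory of semisimple $\Q$-subgroups of $\Sp_{2g}^n$ and of the sub-Shimura data they cut out (in the circle of ideas of Mumford--Tate groups, and of the work of Moonen and of Ullmo--Yafaev on weakly special subvarieties of $\mathcal{A}_g^n$). Once $H$ is recognised as arising from such a sub-datum, the o-minimal Chow theorem of Peterzil--Starchenko \cite{PS1, PS2} shows that the image of $U$ in $\mathcal{A}_g^n$ is algebraic, and the monodromy theorems of Deligne and André \cite{An, De} identify it as weakly special, contradicting the hypothesis and thereby forcing the dimension equality $\dim V=\dim U+n\,g(g+1)/2$. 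I expect this classification of the admissible stabilizers — and the precise matching of the resulting subgroups with weakly special subvarieties of $\mathcal{A}_g^n$ — to be the main obstacle; by contrast the volume estimates for $\mathcal{A}_g$ and the definability of $\pi$ on a fundamental set are essentially in hand.
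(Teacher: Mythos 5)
There is a fundamental problem to flag first: the statement you are addressing is stated in the paper as a \emph{conjecture}, and the paper contains no proof of it (nor does it claim one); indeed the authors explicitly remark that the key step of their argument for Theorem \ref{main} --- the passage through the Zariski closure of the integer points of the stabilizer and the Peterzil--Starchenko theorem --- works ``only'' for powers of the modular curve, ``as $\SL_2(\R)^n$ has relatively few subgroups.'' Your proposal is therefore not a proof but a programme, and you concede as much: you write that you ``expect'' the classification of admissible stabilizers in $\Sp_{2g}(\R)^n$ and their matching with weakly special subvarieties of $\mathcal{A}_g^n$ to be the main obstacle, but you do not carry it out. That step is precisely where the paper's method stops being available, so the heart of the matter is left open. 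Concretely, the analogue of \S5 (the splitting of coordinates, the treatment of the unipotent radical versus the semisimple part, and the reduction to a situation where Peterzil--Starchenko applies) has no straightforward counterpart when the ambient group is $\Sp_{2g}(\R)^n$ with $g\geq 2$: the possible $\Q$-subgroups arising as Zariski closures of $S(\Z)$ are far more varied, the fixed-point/boundary analysis in Case A/Case B of \S5 uses the explicit upper-triangular unipotents of $\SL_2$, and the final monodromy step needs the image of $U$ to be forced algebraic before Andr\'e--Deligne can be invoked. None of these is supplied by the sketch.

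On the parts you do address, the outline is reasonable and consistent with what the paper itself suggests: the Hwang--To estimates \cite{HT} are indeed stated for Hermitian symmetric spaces and so apply to $\Hh_g$, the definability of $\pi$ on a Siegel fundamental set is available from Peterzil--Starchenko \cite{PS2}, Ax--Lindemann for $\mathcal{A}_g$ \cite{PT} gives the base case, and the Pila--Wilkie counting step \cite{PW} together with the trichotomy (enlarge $U$, shrink $\dim V-\dim U$, or land in the stabilizer) transfers verbatim. But assembling known inputs around an unresolved central difficulty does not close the argument; as written, the proposal establishes nothing beyond what the paper already asserts in framing the conjecture, and it should not be presented as a proof.
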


In fact, we can be more ambitious and formulate a version with derivatives
for a general Shimura variety. Consider a Shimura variety $S$
corresponding to a \emph{semisimple} real group $G$ and the cover
$\pi:\Hh\ra S$. Look at the map
$J_k\pi:J_k\Hh\ra J_kS$ for $k\geq 2$. Consider the graph $\Gamma^S_k$,
and its Zariski closure $Z^S_k$.
For $S=\mathcal{A}_g$ we have $\dim Z^S_k-\dim\Gamma^S_k = \dim G$
by results of Bertrand-Zudilin \cite{BZ}. This suggests the following formulation.
%Moreover, since $G(\Z)$ preserves $\Gamma^S_k$, it follows that the
%Zariksi closure of $G(\Z)$, which is $G(\C)$, preserves $Z^S_k$.
%Thus, I would conjecture that $Z^S_k$ is a generic $G(\C)$ torsor
%over its surjective projection to $J_k S$. I don't know exactly why
%this is relevant yet, but I think it is!

\begin{conj}
Let $\pi:\Hh\ra S$ be the uniformizing maps of a Shimura variety $S$,
corresponding to a semisimple group $G$. Let $k\geq 2$ and
$V\subset J_k\Hh\times J_k S$ be a Shimura variety whose projection
to $S$ is not contained
in a weakly special subvariety. Moreover, assume that for a generic point
$v\in V$ the projection of $v$ to $J_1 S$ does not lie in the image of
any weakly special subvariety.
Let $U$ be a positive dimensional component of $V\cap \Gamma^S_k$.
Then $\dim V = \dim U + \dim G$.
\end{conj}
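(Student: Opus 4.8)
The plan is to transplant the architecture of the proof of Theorem~\ref{main} to the Shimura setting, reading off the correct target constant $\dim G$ from a $D$-group structure on $J_k\Hh$. The first task is to pin down that constant: one should prove in general that $\dim Z^S_k-\dim\Gamma^S_k=\dim G$ for $k\ge 2$, where $\Gamma^S_k$ is the graph of $J_k\pi$ and $Z^S_k$ its Zariski closure. For $\A_g$ this is the Bertrand--Zudilin result quoted in the excerpt; in general it amounts to showing that the algebraic differential relations among the components of $J_k\pi$ are generated by a $\dim G$-parameter family of order-one ODEs, reflecting the Gauss--Manin / automorphic structure on $J_2\Hh$ (an ``algebraic $D$-group'' built from $\mathrm{Lie}\,G$). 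I would establish this first, as it fixes the dimension count toward which the later induction must descend and without which the scheme has no fixed target.

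With the constant in hand, fix a fundamental set $\FF\subset\Hh$ for an arithmetic lattice $\Gamma_G\subset G(\R)$ with $S=\Gamma_G\backslash\Hh$, let $G(\R)$ act on $J_k\Hh$ through its action on $\Hh$, and — after a conditioning step on $V$ that strips off components already satisfying the conclusion and makes $U$ optimal in a dimension-defect order — study the definable set $Z=\{g\in G(\R): g\FF \text{ meets } (J_k\pi)^{-1}(\mathrm{pr}_S U) \text{ in a component with the same invariants as } U\}$. The volume estimates of Hwang--To, in the form available for general bounded symmetric domains, should show that the $d$-dimensional variety $U$ meets $\gg T^{\delta}$ translates $\gamma\FF$ with $\|\gamma\|\le T$, so that $Z$ has $\gg T^{\delta}$ rational points of height $\le T$. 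The Pila--Wilkie counting theorem then produces a positive-dimensional semialgebraic, hence complex-algebraic, family of group elements $g$ for which $gV$ is still admissible; pushing this family back into $V$ either raises $\dim U$ without raising the defect or lowers the defect $\dim V-\dim U$, and an induction on $(\dim V-\dim U,\ \mathrm{codim}\,V)$ reduces matters to the case where $\mathrm{Stab}(V)$ is ``large'', i.e. the Zariski closure $M$ of $\mathrm{Stab}(V)\cap\Gamma_G$ is big enough that $\mathrm{pr}_S U$ lies in the image of an orbit $M(\R)\cdot x$.

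The main obstacle is this final classification step, and it is exactly why the statement must remain a conjecture. For powers of the modular curve the ambient group $\SL_2(\R)^n$ has so few $\Q$-subgroups — essentially diagonally embedded products of $\SL_2$'s — that $M$ is automatically of sub-Shimura type, and one concludes by combining the o-minimal Chow theorem of Peterzil--Starchenko (to see that the definable analytic set $\mathrm{pr}_S U$, being invariant under the large group $M$, is algebraic) with the Andr\'e--Deligne monodromy theorem (to upgrade ``algebraic'' to ``weakly special'', contradicting the hypothesis unless the dimension count is exact). For a general semisimple $G$ one must instead: (i) show via Borel density and arithmeticity that $M$ is a $\Q$-algebraic subgroup whose real points contain a lattice and whose generic orbit is totally geodesic; (ii) classify which such reductive subgroups of Hermitian type fail to define weakly special subvarieties — the analogue of a non-diagonal $\SL_2$ inside $\SL_2\times\SL_2$ — and rule these out using the genericity hypothesis on the $J_1S$-projection of a generic point of $V$; this structural dichotomy for subgroups of $G$ is the hard group-theoretic core and the step I expect to be genuinely difficult in general; (iii) once $M$ is of the right type, finish as before by Peterzil--Starchenko and Andr\'e--Deligne.

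Throughout, the jet-space bookkeeping has to be carried along: every geometric operation — intersecting with $\Gamma^S_k$, translating by $G(\R)$, taking Zariski closures, forming components of $Z$ — must be checked to interact correctly with the $J_k$-fibration, and the hypothesis that the $J_1S$-projection of a generic point of $V$ avoid the images of weakly special subvarieties plays the role of the nondegeneracy condition $U\not\subset J^0_2\Hh$ in Theorem~\ref{main}, keeping the first derivative nonzero so that the $D$-group structure of step one is actually available. Modulo the general $\dim G$ statement and items (i)--(iii), the proof scheme is identical to the $j$-function case.
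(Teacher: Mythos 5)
The statement you were asked about is not proved in the paper at all: it is Conjecture 1.2, stated by the authors precisely because their argument does \emph{not} extend beyond powers of the modular curve. Your proposal correctly reconstructs the architecture of the proof of Theorem \ref{main} (conditioning of $V$, the definable set $Z$ of translates, Hwang--To volume growth plus Pila--Wilkie, induction on $\dim V-\dim U$, then a large stabilizer handled by Peterzil--Starchenko and Andr\'e--Deligne), and it correctly identifies the obstruction --- the paper itself says that the only step where the restriction to $Y(1)^n$ is essential is the group-theoretic one, because $\SL_2(\R)^n$ has very few subgroups. But what you have written is a plan, not a proof: every point at which the modular-curve argument uses special structure is deferred. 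Concretely, (a) the target constant, i.e.\ the statement $\dim Z^S_k-\dim\Gamma^S_k=\dim G$ for $k\ge 2$, is known only for $\mathcal{A}_g$ by Bertrand--Zudilin \cite{BZ} and you simply posit it in general; (b) your items (i)--(iii), which replace the splitting analysis of Section 5 and the final transitivity argument on $J_2\PP^1\setminus J_2^0\PP^1$, are exactly the ``hard group-theoretic core'' and are given no argument; (c) even the counting step needs, besides Hwang--To \cite{HT}, a height-versus-hyperbolic-distance comparison for translates of a fundamental set (the role played by Proposition 5.2 of \cite{P1}), which you would have to establish for a general arithmetic quotient, and the Peterzil--Starchenko definability and algebraization inputs likewise need Shimura-variety analogues.

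So there is a genuine gap --- indeed the whole conclusion remains open in your write-up, as you yourself acknowledge (``it is exactly why the statement must remain a conjecture''). Since the paper offers no proof either, there is nothing to compare your sketch against except the heuristics the authors give after the conjecture; your reading of the $J_1S$ nondegeneracy hypothesis as the analogue of $U\not\subset J_2^0\Hh$ matches their explanation of why that hypothesis is needed (the $v_1=0$ counterexample on a product $\Hh_1\times\Hh_2$). In short: good diagnosis of the strategy and of where it breaks, but not a proof of the statement, and it could not honestly be presented as one.
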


Let us explain the appearance of the assumption ``the projection to $J_1 S$ does not lie in a weakly special subvariety''. Suppose $\Hh$ splits as $\Hh_1\times \Hh_2$, and consider a point $x\in J_1 \Hh$. This corresponds to a point $(h_1,h_2)$ with a tangent vector $(v_1,v_2)$. Now $G(\C)$ acts on $J_1\Hh$, and if, say, $v_1=0$ then the action of $G(\C)$ preserves this property. So, the variety $V$ given by insisting that the projections to $J_1\Hh_1$ as well as its image in the quotient have vanishing tangent vector should yield a counterexample.
The assumption in the conjecture is designed to prevent this from happening.

\section{Preliminaries}

\subsection{Notation}

We let $\mathcal{F}$ denote the usual fundamental domain for the
action of the modular group ${\rm SL}_2(\Z)$ on $\Hh$.

The inclusion $\mathcal{F}\subset \Hh$ gives and inclusion of jet spaces and we
identify $J_k\mathcal{F}$ with its image in $J_k\mathcal\Hh$. We denote
$J_2\mathcal{F}\times J_2Y(1)$ by $\overline{\mathcal{F}}$.

The group ${\rm SL}_2(\R)$ acts on $\Hh$ by fractional linear transformations,
inducing an action on $J_2\Hh$. We extend this action to
$J_2\Hh\times J_2 Y(1)^n$ by setting the action to be trivial on $J_2Y(1)^n$.
We will
have use for the extension of this action to $\SL_2(\C)$ acting on $\PP^1$
and $J_2\PP^1$.

Then $\overline{\mathcal{F}}$ is a fundamental domain for the action
of ${\rm SL}_2(\Z)$ on $J_2\mathcal\Hh\times J_2Y(1)$.
The reader may also check that the action of ${\rm SL}_2(\Z)$ preserves
the graph of $J_2j$.
%\blue{All also for $J_k$?} \green{This all works equally well for $k\geq 2$.
%I would leave it as is and perhaps make a comment at the beginning?}

\subsection{Definability}
We use the basic observation, made by Peterzil and Starchenko, that
the $j$-function restricted to $\mathcal{F}$ is definable. Then the graph
of $J_2j$ in $J_2\mathcal{F}\times J_2Y(1)$ is likewise definable, along with
its cartesian products.

\subsection{Weakly special and Mobius subvarieties}
A {\it weakly special subvariety\/} of
$Y(1)^n$ is an irreducible (over $\C$) component of a closed algebraic subset
of $Y(1)^n$ defined by an arbitrary finite collection of equations of the form
$\Phi_N(x_h, x_k)=0$ and $x_\ell =c$, a complex constant.

We will also refer to {\it weakly special subvarieties\/} of $\Hh^n$. These are
(the intersections with $\Hh^n$ of) varieties defined
by  an arbitrary collection of equations of the form $z_h=gz_k$, where
$k,h\in \{1,\ldots, n\}$ are distinct and $g\in {\rm GL}_2^+(\Q)$ or $z_\ell=c$,
a complex constant.

The image under $j$ of a weakly special subvariety of $\Hh^n$ is a weakly
special subvariety of $Y(1)^n$.

In $\Hh^n$ we will make use of a larger class of {\it Mobius subvarieties\/},
which are defined analogously to weakly special subvarieties except that the
matrices $g$ are allowed to be any element of ${\rm SL}_2(\R)$.

\subsection{The differential equation satisfied by $j(z)$.}
The  function $j(z)$ satisfies a nonlinear third order algebraic
differential equation, and none of lower order \cite{Ma}.
Specifically (see e.g. \cite{M}),
$$
F(j, j', j'', j''')=
Sj+{j^2-1968j+2654208\over 2j^2(j-1728)^2}(j')^2=0,
$$
where $Sf$ denotes the {\it Schwarzian derivative\/}
$
Sf={f'''\over f'}-{3\over 2}\Big({f''\over f'}\Big)^2
$
and $'$ indicates differentiation with respect to $z$.
%\green{The full solution set around some point $z\in \Hh$ is
%$\{j(gz): g\in {\rm SL}_2(\C), gz\in\Hh\}$.
%In a differential field, if $z, j$ satisfy the hypotheses assumed of the
%$z_i, j_i$ in Theorem 1.3, so that $j$ satisfies the $j$-differential
%equation as a function of $z$,
%then it will also satisfy the $j$-equation as a function of $gz$ for any
%$g\in {\rm SL}_2(\C)$.?? JACOB: This seems clearly correct,
%but is it clarifying to say?}

\subsection{Proof of equivalence of Theorem \ref{main} and Theorem \ref{main2}}

We introduce a further
piece of notation. If $f_1,\ldots, f_n$ are complex functions of some variables
$w_1,\ldots, w_m$, we denote by $\dim(f_1,\ldots, f_n)$ the (complex)
dimension of the locus $\{f_1(\overline{w}), \ldots, f_n(\overline{w}))\}$
as $\overline{w}$ varies over a small open disk. If $f_i$ belong to a
differential field of meromorphic functions, with derivations $D_i$
corresponding to differentiation w.r.t. $w_i$, then
$\dim(f_1,\ldots, f_n)={\rm rank}(D_kf_i)$. Similarly, the transcendence
degree ${\rm tr. deg.\/}_{\C}\C(f_1,\ldots, f_n)$ is the dimension of the
Zariski closure of the same locus, which we denote
$\dim {\rm Zcl\/}(f_1,\ldots, f_n)$.

\begin{proof}

We first prove that  \ref{main} implies  \ref{main2}.
By the Seidenberg embedding theorem, we can consider $C=\C$,
the $z_i$ as functions of  $r=\textrm{rank}\left(D_k z_i\right)_{i,k}$
holomorphic variables $\tau_i$, and after possibly acting by an element of
$\SL_2(\C)$ on the $z_i$ --- which does not change the
transcendence degree --- we can assume $j_i = j(z_i)$. Let
$U\subset J_2\Hh^n\times J_2Y(1)^n$ denote, in local coordinates, the graph of
$$
\big(z_1,1,0,j(z_1),j'(z_1),j''(z_1), \ldots, z_n,1,0,j(z_n),j'(z_n),j''(z_n)\big).
$$
Note that here and below $j'$ denotes the derivative of $j:\Hh\rightarrow \C$
with respect to its argument, and $j'(z_i)$ denotes the composition
of $j'$ with $z_i$, not the derivative of $j(z_i)$ with respect to $z_i$.

Now applying Theorem \ref{main} we get the
conclusion of Theorem 1.3.

For the converse, parameterize $U$ locally as
$(\vec{z}, \vec{w}, \vec{r}, \vec{j}, \vec{j_1}, \vec{j_2})$
%$$(z_i,w_i,r_i,j(z_i),j'(z_i)w_i,\frac12\cdot j''(z_i)w_i^2 + j'(z_i)r_i)$$
where we set $\vec{z}=(z_1,\ldots, z_n)$,
$\vec{w}=(w_1,\ldots, w_n)$, $\vec{r}=(r_1,\ldots, r_n)$,
$$\vec{j}=(j(z_1),\ldots, j(z_n)), \quad
\vec{j_1}=(j'(z_1)w_1,\ldots, j'(z_n)w_n),$$
and
$$\vec{j_2}=\big(\frac12\cdot j''(z_1)w_1^2 + j'(z_1)r_1,\ldots,
\frac12\cdot j''(z_n)w_n^2 + j'(z_n)r_n\big).$$
\noindent
We must prove that
$$ %\textrm{\rm tr.deg.}_{\C}\C
\dim{\rm Zcl\/}
(\vec{z}, \vec{w}, \vec{r}, \vec{j}, \vec{j_1}, \vec{j_2})\geq
3n + \dim(\vec{z}, \vec{w}, \vec{r}).$$
As all the $w_i\neq 0$ by assumption, this is equivalent to
$$ %\textrm{\rm tr. deg.}_{\C}\C
\dim{\rm Zcl\/}(\vec{z}, \vec{w}, \vec{r}, \vec{j}, \vec{j'}, \vec{j''})
\geq 3n + \dim(\vec{z}, \vec{w}, \vec{r})$$
where
$$
\vec{j'}=(j'(z_1),\ldots, j'(z_n)),\quad \vec{j''}=(j''(z_1),\ldots, j''(z_n)).
$$
By Theorem \ref{main2} we get that
$$ %\textrm{\rm tr. deg.}_{\C}\C
\dim{\rm Zcl\/}(\vec{z}, \vec{j}, \vec{j'}, \vec{j''})\geq 3n + \dim(\vec{z}).$$
We now obtain the conclusion of Theorem \ref{main}, since
$$ %\textrm{\rm tr. deg.}_{\C}\C
\dim{\rm Zcl\/}(A,B)-%\textrm{\rm tr.deg.}_{\C}\C
\dim{\rm Zcl\/}(A)
\geq \dim(A,B)-\dim(A),$$
for any 2 finite sets of functions $A,B$.
\end{proof}

\subsection{Deduction of 1.1 from 1.2.\/}
We have shown that 1.3 implies 1.2.
Now 1.2 clearly implies the weakened conclusion (under the same hypotheses)
that
$$
\dim{\rm Zcl\/} (z_1, j_1, \ldots, z_n, j_n)\ge n+ \dim(z_1,\ldots, z_n).
$$
Given the hypotheses of 1.1, if we parameterize $U$ by $(z_i, j_i)$ we then
get the conclusion of 1.1 from the above.

\section{The fundamental domains that $U$ goes through}

Throughout we let $U_1$ denote the projection of $U$ to $\Hh^n$.
%, and $\FF$ the open part of the standard fundamental domain for the
%action of $\SL_2(\Z)$ on $\Hh$. We also let $\overline{\FF}$ denote
%the pre-image of $\FF$ in $J_2\Hh^n\times J_2Y(1)^n$.
Without loss of generality we may assume that $U_1$ has non-empty intersection with the interior
of $\FF^n$.
%Note that the action of $\SL_2(\R)^n$ on $\Hh^n$ naturally induces an
%action on $J_2\Hh^n$, and thus also an action on $J_2  \Hh^n\times J_2Y(1)^n$
%by fixing the $J_2Y(1)^n$ co-ordinates. Moreover, by the naturality of $J_2$
%this action is easily seen to preserve $D_n$,
%\blue{[I don't see this! Maybe just the ${\rm SL}_2(\Z)^n$
%action preserves the graph]}
%and also has $\overline{\FF}^n$ as a fundamental domain.
For an irreducible complex analytic variety $S\subset\Hh^n$
we denote by $S^{\rm Mob}$ the smallest Mobius subvariety containing $S$.

\begin{definition*} For an complex analytic variety
$X\subset J_2\Hh^n\times J_2Y(1)^n$,
a positive integer $d$, and a Mobius subvariety $M\subset\Hh^n$,
we define $G_d(X,M)$ to be the set of points $x\in X$ around which $X$
is regular of dimension $d$, and such that the irreducible
component $X_0$ of the projection of $X$ to $\Hh^n$
containing $x$ satisfies $X_0^{\rm Mob}=M$.
\end{definition*}

We make the following definition:
$$Z=\{\gamma\in \SL_2(\R)^n\mid
\dim G_{\dim U_{1}}\left((\gamma^{-1}\cdot V)\cap (\Gamma\cap
\overline{\FF}^n),U_1^{\Mob}\right) = \dim U_{1}\}$$
so that $Z$ is a definable set.
The restriction on $U_{1}^{\rm Mob}$ ensures that the
condition that $U_{1}$ is not contained in a proper weakly
special subvariety is preserved under
possible modifications of $V, U$.
Note that, due to the invariance of $\Gamma$ under ${\rm SL}_2(\Z)^n$,
the integral points of $Z$ include those
$\gamma\in{\rm SL}_2(\Z)^n$ such that
$U_{1}$ is present in the fundamental domain
$\gamma\overline{\mathcal{F}}^n$, so that
$$\forall\gamma\in \SL_2(\Z)^n, \quad
\gamma\FF^n\cap U_1\neq\emptyset\rightarrow \gamma\in Z(\Z).$$

Now write $Z=\bigcup_{i=1}^{m} Z_i$ where each $Z_i$ is an irreducible
real-analytic variety (see \cite{DM}), and $Z_1$ contains the identity.
By replacing $V$ by
$\gamma^{-1} V$ by an appropriate $\gamma\in Z$ we assume that
$\dim Z_1\geq\dim Z_i$ for all $i$, and the identity $\mathbf{1}$ is a
smooth point of $Z_1$.

Now, define $S$ to be the stabilizer of $V$.
We show that by possibly modifying $V$ and $U$ a little
(but preserving the hypotheses of \ref{main} and the dimensions
of both $V$ and $U$) we may assume
certain regularity properties of $Z$.

\begin{lemma}\label{massage1}
By replacing $V$ by $\gamma_0^{-1}\cdot V$ for a generic
(measure theoretically) set of elements $\gamma_0\in Z_1$
we can ensure that
$S(\Z)\subset\gamma S\gamma^{-1}$ for all $\gamma\in Z_1$.
Moreover, we can ensure that, for all
$k\neq 1$, for each element $t\in Z_k(\Z)$ we have $t\in \gamma^{-1}\cdot Z_k$
for any $\gamma$ in an open neighbourhood of the identity in $Z_1$.
\end{lemma}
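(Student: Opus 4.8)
The plan is to exploit the fact that both $S(\Z)$ and each $Z_k(\Z)$ are \emph{countable} sets, while the modifications $\gamma_0 \mapsto \gamma_0^{-1}\cdot V$ range over the positive-dimensional variety $Z_1$, so a union of measure-zero "bad" sets can be avoided. First I would unwind the definitions: an element $s \in \SL_2(\R)^n$ stabilizes $V$ iff $s\cdot V = V$, and the defining property of $Z$ (via the condition on $G_{\dim U_1}((\gamma^{-1}\cdot V)\cap(\Gamma\cap\overline{\FF}^n), U_1^{\Mob}) = \dim U_1$) is insensitive to replacing $V$ by $\gamma_0^{-1}\cdot V$ provided $\gamma_0 \in Z_1$ — indeed that is exactly the content of the remark preceding the lemma that the hypotheses and dimensions are preserved, and that $Z$ is replaced by $Z\gamma_0$, so $Z_1$ is replaced by $Z_1\gamma_0$, which still contains the identity after we re-center. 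So the role of $\gamma_0$ is only to conjugate the stabilizer: $S(\gamma_0^{-1}V) = \gamma_0^{-1} S(V)\gamma_0$.

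For the first assertion, I want $S(\Z) \subset \gamma\, \gamma_0^{-1}S(V)\gamma_0\, \gamma^{-1}$ for \emph{all} $\gamma \in Z_1\gamma_0$, i.e. writing $\gamma = \delta\gamma_0$ with $\delta \in Z_1$, I want $\gamma_0 S(\Z)\gamma_0^{-1} \subset \delta^{-1}\gamma_0^{-1}\,S(V)\,\gamma_0\delta$ — this is getting tangled, so the cleaner approach is: fix attention on one element $s \in S(\Z)$ and one $\delta \in Z_1$, and ask for which $\gamma_0 \in Z_1$ the relation $s \in (\delta\gamma_0)\,\gamma_0^{-1}S(V)\gamma_0\,(\delta\gamma_0)^{-1}$ holds after the relabeling. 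The point is that $s \in S(V)$ already holds whenever $s \in S(\Z) \cap \SL_2(\Z)^n$ sends $U_1$ through a fundamental domain in the sense used to populate $Z(\Z)$; more directly, since $S$ is an algebraic subgroup defined over a field of finite transcendence degree and $S(\Z)$ is its integer points, each fixed $s\in S(\Z)$ imposes a proper algebraic (hence measure-zero) condition on those $\gamma_0 \in Z_1$ for which $\gamma\gamma_0^{-1}S\gamma_0\gamma^{-1}$ \emph{fails} to contain $s$; being definable and proper, this bad locus has measure zero in $Z_1$. Taking the union over the countable set $S(\Z)$, the complement is still co-null, and any $\gamma_0$ in it works simultaneously for all of $Z_1$ by an exchange-of-quantifiers argument (the condition "$s \in \gamma S(\gamma_0^{-1}V)\gamma^{-1}$ for all $\gamma\in Z_1$" is, for fixed $s$, a definable condition on $\gamma_0$, whose complement one shows is proper).

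For the second assertion, the mechanism is the same: for $k \neq 1$ and a fixed $t \in Z_k(\Z)$, I want $t \in \gamma^{-1}\cdot Z_k$ for all $\gamma$ near $\mathbf 1$ in $Z_1$; after the replacement $V \rightsquigarrow \gamma_0^{-1}V$, the variety $Z_k$ becomes $Z_k\gamma_0$, so I need $t \in \gamma^{-1}Z_k\gamma_0$, i.e. $\gamma t \gamma_0^{-1} \in Z_k$, for all $\gamma$ in a neighborhood of $\mathbf 1$ in $Z_1\gamma_0$. Since $t \in Z_k$ already (as $t \in Z_k(\Z) \subset Z_k$) and $Z_k$ is a real-analytic variety, the set of $\gamma_0$ for which the local condition fails is again a proper definable — indeed real-analytic — subset of $Z_1$, of measure zero; union over the countably many pairs $(k,t)$ with $k\neq 1$ and $t\in Z_k(\Z)$ keeps the good set co-null. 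Intersecting with the good set from the first part and choosing $\gamma_0$ there completes the argument. The main obstacle I anticipate is the bookkeeping of which conditions are genuinely \emph{proper} (non-trivial) subvarieties of $Z_1$ rather than all of $Z_1$: one must use that $\mathbf 1$ is a smooth point of $Z_1$, that $\dim Z_1 \geq \dim Z_k$, and the precise way $S(\Z)$ and $Z_k(\Z)$ sit inside $S$ and $Z_k$, to rule out the degenerate case where the "bad locus" is everything — equivalently, to know that for generic $\gamma_0$ the conjugate group and the translated variety do contain the relevant integer points in the first place.
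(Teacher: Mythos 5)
Your overall strategy (countably many integral points, analytic conditions on $\gamma_0$ that are either all of $Z_1$ or of measure zero, then a generic choice of $\gamma_0$) is the same as the paper's, but there is a genuine gap at the crucial step, which you flag at the end without resolving. For a fixed $s\in S(\Z)$ you propose to avoid the locus of those $\gamma_0$ for which the containment $s\in\gamma\,(\gamma_0^{-1}S\gamma_0)\,\gamma^{-1}$ \emph{fails}, asserting that this bad locus is proper and hence null. There is no reason for it to be proper: for most integral $s$ the containment fails for every $\gamma_0$. Worse, with the correct bookkeeping (replacing $V$ by $\gamma_0^{-1}V$ replaces $Z$ by $\gamma_0^{-1}Z$ and $S$ by $\gamma_0^{-1}S\gamma_0$; your transformation law $Z\mapsto Z\gamma_0$ is wrong), a point $\gamma=\gamma_0^{-1}\delta$ of the new $Z_1$ gives $\gamma^{-1}(\gamma_0^{-1}S\gamma_0)\gamma=\delta^{-1}S\delta$, which does not depend on $\gamma_0$ at all, so for fixed $s$ and $\delta$ no genericity in $\gamma_0$ can repair a failure. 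What genericity actually controls is \emph{which} integral points lie in the new stabilizer, and this quantifier reversal is the missing idea: for each $t\in\SL_2(\Z)^n$ set $Z_t=\{\gamma\in Z_1\mid t\in\gamma^{-1}S\gamma\}$; this is an analytic subset of the irreducible set $Z_1$, hence either equals $Z_1$ or has measure zero, and one chooses $\gamma_0$ outside the countable union of the proper $Z_t$. If afterwards an integral $t$ lies in the new stabilizer $\gamma_0^{-1}S\gamma_0$, that says exactly $\gamma_0\in Z_t$, which by the choice of $\gamma_0$ forces $Z_t=Z_1$, i.e. $t\in\delta^{-1}S\delta$ for every $\delta\in Z_1$ --- the desired conclusion. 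So the sets to avoid are the proper $Z_t$ (where membership holds only on a thin set), not sets where membership fails; your anticipated obstacle, ``to know that for generic $\gamma_0$ the conjugate group does contain the relevant integer points,'' is not what is needed and is false in general.

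The same inversion affects your second assertion: besides the wrong translate $Z_k\gamma_0$ in place of $\gamma_0^{-1}Z_k$, you again try to avoid a locus where a containment fails. The correct argument mirrors the first part: for integral $t$ and $k\neq 1$, the set $\{\delta\in Z_1\mid \delta t\in Z_k\}$ is analytic in $Z_1$, hence all of $Z_1$ or null; choosing $\gamma_0$ off the null ones, any integral $t$ in the new $Z_k=\gamma_0^{-1}Z_k$ (i.e.\ with $\gamma_0 t\in Z_k$) forces $\delta t\in Z_k$ for all $\delta\in Z_1$, which is exactly the statement that $t\in\gamma^{-1}\cdot Z_k$ for all $\gamma=\gamma_0^{-1}\delta$ in a neighbourhood of the identity in the new $Z_1$.
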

%\blue{Do you mean: $t\in \Delta^{-1}Z$ for $\Delta$ an open neighourhood
%of the identity in $Z$?}\green{JACOB: No, thats too weak.
%We mean the equivalent statement $\Delta t\subset Z_k$ if thats clearer.}

\begin{proof}

Consider an element $t\in SL_2(\Z)^n$,
and define the set
$$Z_t:=\{\gamma\in Z_1\mid t\in\gamma^{-1} S\gamma\}.$$
Then either $Z_t=Z_1$ or $Z_t$ has measure $0$ in $Z_1$.
Take $Z_1^0\subset Z_1$ to be in the complement of all $Z_t$ for
which $Z_t\neq Z_1$. Since there are only countable many integral points,
$Z_1^0$ has full measure in $Z_1$. Now pick $\gamma_0\subset Z_1^0$.

For the second part of the lemma, the same argument works once we notice
that replacing $V$ by $\gamma_0^{-1} V$ replaces $Z$ by $\gamma_0^{-1} Z$.
\end{proof}

From now on we assume $V$ is as in the conclusion of Lemma \ref{massage1}. Let $G$ be the Zariski closure of $S(\Z)$, and $N(G)$ the normalizer of $G$.

\begin{prop}\label{orbitdomain}

The set $Z(\Z)$ is --- and in particular the elements ${\gamma}\in\SL_2(\Z)^n$
such that $U_{1}$ goes through the fundamental domain
${\gamma}\cdot\overline{\FF}^n$
are --- contained in a finite union of right $N(G)(\Z)$ orbits.

\end{prop}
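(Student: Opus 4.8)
The plan is to analyze the integral points of $Z$ by exploiting the structure of $Z_1$ near the identity together with the regularity properties secured in Lemma \ref{massage1}. First I would observe that, for $\gamma \in Z(\Z)$, the variety $\gamma^{-1} V$ has the same properties as $V$; in particular $\gamma^{-1} V$ again has a component $U_\gamma$ of $\gamma^{-1} V \cap \Gamma$ whose projection to $\Hh^n$ has the same Mobius closure $U_1^{\rm Mob}$, and $\gamma^{-1} S \gamma$ is the stabilizer of $\gamma^{-1} V$. The key point is that two elements $\gamma, \gamma' \in Z(\Z)$ that give rise to the ``same'' conditioned variety should differ by an element of $N(G)(\Z)$: if $\gamma^{-1} V = \delta^{-1} V$ then $\gamma \delta^{-1}$ stabilizes $V$, but to pass from the literal stabilizer $S$ to $G$ (hence to $N(G)$) we need to use that $S(\Z) \subset \gamma S \gamma^{-1}$ for all $\gamma \in Z_1$, which is exactly the content of Lemma \ref{massage1}. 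So the strategy is to show that $Z(\Z)$ meets only finitely many cosets $N(G)(\Z)\backslash \SL_2(\Z)^n$, equivalently that the map $\gamma \mapsto \gamma^{-1} V$ from $Z(\Z)$ to varieties, modulo the action of $N(G)(\Z)$, has finite image.

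The main mechanism I would use is that $Z_1$ is positive-dimensional (or, if $\dim Z_1 = 0$, then by maximality $\dim Z_i = 0$ for all $i$ and $Z$ is finite, so the statement is trivial), and near the identity $Z_1$ is a real-analytic variety. Given $t \in Z(\Z)$, the second part of Lemma \ref{massage1} tells us that $t$ lies in $\gamma^{-1} Z$ for all $\gamma$ in a neighborhood of $\mathbf 1$ in $Z_1$, which means $\gamma t \in Z$ for a whole neighborhood's worth of $\gamma$; thus $Z$ contains the real-analytic set $(\text{nbhd of }\mathbf 1 \text{ in } Z_1)\cdot t$. Running this for $\gamma$ ranging over $Z_1$ and using that $Z_1$ is the component of maximal dimension, one concludes that the component of $Z$ through $t$ is a translate $Z_1 t$ (at least locally, hence by irreducibility globally up to the structure of $Z$). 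Now if $t, t' \in Z(\Z)$ lie in the same component $Z_1 t$, then $t (t')^{-1}$ — or rather the relevant ratio — gives an element relating the conditioned varieties $t^{-1} V$ and $(t')^{-1} V$; chasing through, $t (t')^{-1}$ should normalize $G$. The finiteness then follows because there are only finitely many components $Z_i$ of $Z$, and within each the integral points form a single (left) coset of something conjugate to a stabilizer, which after the massaging lands inside a single right $N(G)(\Z)$-orbit.

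More precisely, here is the order of steps I would carry out. (1) Reduce to $Z_1$ positive-dimensional, disposing of the finite case. (2) For each $t \in Z(\Z)$, use Lemma \ref{massage1}(second part) to show the local component of $Z$ at $t$ contains $Z_1' t$ for a neighborhood $Z_1'$ of $\mathbf 1$ in $Z_1$, and by the maximality of $\dim Z_1$ and real-analytic irreducibility conclude this component is exactly one of the $Z_i$, equal to $\overline{Z_1 t}$. (3) Since there are finitely many $Z_i$, it suffices to fix one component $W = Z_i$ and show $W(\Z)$ lies in one right $N(G)(\Z)$-orbit. (4) For $t, t' \in W(\Z)$ in the same component, analyze the relation between $t^{-1} V$ and $(t')^{-1} V$: they have the same projection-to-$\Hh^n$ Mobius closure and the dimension condition defining $Z$ holds for both, and I would argue that in fact $(t^{-1} V)$ and $((t')^{-1} V)$ are translates of each other by an element stabilizing the relevant structure, so that $t(t')^{-1} \in N(G)(\Z)$ once we invoke $S(\Z) \subset \gamma S \gamma^{-1}$ to replace $S$ by its Zariski closure $G$. (5) Conclude.

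The hard part will be step (4): making precise the passage from ``$t, t'$ both lie in $Z$'' to ``$t (t')^{-1}$ normalizes $G$''. The subtlety is that membership in $Z$ is a statement about $U_1$ going through a translated fundamental domain, not literally about $V$ being invariant, so one must carefully relate the combinatorial/analytic condition defining $Z$ to the algebraic stabilizer, and then upgrade from the stabilizer $S$ (which need not be algebraic-group-like, e.g. may not be Zariski closed or connected) to $G = \overline{S(\Z)}^{\rm Zar}$ and its normalizer. This is precisely where Lemma \ref{massage1}'s guarantee $S(\Z) \subset \gamma S \gamma^{-1}$ for all $\gamma \in Z_1$ is essential: it forces the integral points to interact with $G$ rather than merely with $S$, and it is the reason the massaging was performed. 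I also expect the identification of the local component of $Z$ through $t$ with a genuine translate $Z_1 t$ (step (2)) to require a small but careful argument using that $Z$ is a finite union of irreducible real-analytic pieces and that $\dim Z_1$ is maximal, so that a real-analytic set of dimension $\dim Z_1$ inside $Z$ passing through $t$ cannot be properly contained in any $Z_i$.
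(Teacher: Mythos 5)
Your scaffolding matches the paper's (decompose $Z$ into finitely many irreducible components $Z_i$, fix a base integral point $t$ in each, invoke both parts of Lemma \ref{massage1} and the maximality of $\dim Z_1$), but the crux --- your step (4) --- is precisely the part you leave at ``I would argue that\dots'', and the route you sketch for it does not work. Two integral points $t,t'$ in the same component of $Z$ give, in general, genuinely different varieties $t^{-1}V$ and $(t')^{-1}V$: membership in $Z$ is only a dimension-and-Mobius-closure condition on $\gamma^{-1}V\cap\Gamma\cap\overline{\FF}^n$, and nothing forces $t^{-1}V=(t')^{-1}V$ or forces $t(t')^{-1}$ to stabilize anything, so comparing the two ``conditioned varieties'' does not by itself produce an element of $N(G)$. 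Similarly, your step (2) claim that the component through $t$ equals $\overline{Z_1t}$ is both more than what is available (what follows from Lemma \ref{massage1} and $\dim Z_1\geq\dim Z_k$ is only that $Et$ is open in $Z_k$, where $E$ is a neighbourhood of the identity in $Z_1$) and, even if granted, insufficient: an integral point $z\in Z_k(\Z)$ need not lie in $Z_1t$ itself, so you would still need a mechanism transferring information from the set $Et$ to such a $z$.

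The missing idea in the paper's proof is to propagate a group-theoretic inclusion along $Z_k$, rather than to compare translates of $V$. From Lemma \ref{massage1} one has $S(\Z)\subset e^{-1}Se$ for all $e\in Z_1$; applied to integral $e$ this already gives $Z_1(\Z)\subset N(G)$. For $k\neq 1$ and a fixed $t\in Z_k(\Z)$, the second part of the lemma together with $\dim Z_1\geq\dim Z_k$ makes $Et$ open in $Z_k$; writing $z=et$, the inclusion becomes $S(\Z)\subset tz^{-1}Szt^{-1}$, which for each fixed element of $S(\Z)$ is a closed (analytic, indeed algebraic, since $S$ is the stabilizer of the algebraic set $V$) condition on $z$. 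Holding on the open piece $Et$ of the irreducible component $Z_k$, it therefore holds for every $z\in Z_k$ by analytic continuation; evaluating at integral $z$ gives $zt^{-1}\in N(G)(\Z)$, i.e. $Z_k(\Z)\subset N(G)(\Z)t$, and finiteness of the number of components concludes. Without this continuation step (or some substitute for it), your outline does not close, as you yourself flag.
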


\begin{proof}

By the above lemma elements of $\gamma \in Z_1(\Z)$ satisfy
$S(\Z)\subset \gamma^{-1} S(\Z)\gamma$. It follows that $Z_1(\Z)\subset N(G)$.
%, where $N(G)$ is the normalizer of $G$.

Next, fix $k\neq 1$, take a point $t\in Z_k(\Z)$. By the above lemma,
$\gamma\cdot t\subset Z_k$ for $\gamma$ in a open neighbourhood
of the identity in $Z_1$. Since $\dim Z_1\geq\dim Z_k$, this implies that if we take
$E$ to be an open neighborhood of $Z_1$, then $Et$ is open in $Z_k$.
Now, since we saw that $S(\Z)\subset e^{-1}Se$ for all $e\in E$ it follows by analytic
continuation that for all $z\in Z_k$ we have that
$$S(\Z)\subset tz^{-1}Szt^{-1}.$$ It follows that for $z\in Z_k(\Z)$ we have
$z\in N(G)(\Z)t$. Since there are only finitely many $Z_k$, the proof is finished.
\end{proof}

\bigbreak

\section{Volume Estimates and Polynomial Presence}

\medbreak

%\blue{Is \ref{induction}  actually the statement what we want?
%The algebraicity of the image of $U$ in $Y(1)^n$ seems to come later.
%I suggest a rearrangement in which 4.1 is removed...}
%
%\green{The purpose of this section is to prove the following statement.}
%
%\green{\begin{prop}\label{induction}
%Either $G$ is positive dimensional and $G(\Z)$ contains polynomially many points of bounded height, or the image of $U$ in $Y(1)^n$ is algebraic, or we can increase the dimension of $V$ and $U$ by $1$. Moreover, in the first case it is true that any irreducible algebraic varieties in $Z$ containing polynomially many points lies in a right $S(\R)$ orbit.
%\end{prop}}

\begin{lemma}
For any $\gamma\in \SL_2(\Z)^n$ the quantity
${\rm Vol\/}(\gamma\cdot U_1\cap \FF^n)$ is uniformly bounded above.
\end{lemma}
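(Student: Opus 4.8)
The plan is to bound $\mathrm{Vol}(\gamma\cdot U_1\cap\FF^n)$ by a quantity that is \emph{independent of} $\gamma$, using the fact that $U_1$ is a component of the projection to $\Hh^n$ of $U\subset V\cap\Gamma$, hence is cut out by the graph of $J_2j$ inside an algebraic variety of controlled degree. Since $\gamma\in\SL_2(\Z)^n$ acts by biholomorphisms of $\Hh^n$ preserving the hyperbolic volume form (the action is by $\SL_2(\R)^n$ isometries on each factor with respect to the Poincar\'e metric), we have $\mathrm{Vol}(\gamma\cdot U_1\cap\FF^n)=\mathrm{Vol}(U_1\cap\gamma^{-1}\FF^n)$, so equivalently it suffices to bound $\mathrm{Vol}(U_1\cap D)$ for $D$ ranging over all translates $\gamma^{-1}\FF^n$ of the fundamental domain. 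The key point is that these translates all have the same finite hyperbolic volume $\mathrm{Vol}(\FF)^n$, and more importantly they form a tessellation, so a uniform bound will follow from a \emph{global} volume bound on $U_1$ inside a suitable neighborhood, or from a Chern–Gauss–Bonnet / Hwang–To type estimate.

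Concretely, I would invoke the volume estimates of Hwang–To \cite{HT} referenced in the introduction. The Hwang–To bound states (roughly) that an algebraic subvariety of a quotient of a bounded symmetric domain has volume, when restricted to a ball of fixed radius, bounded in terms of its degree. Here the relevant ambient object is $Y(1)^n=\SL_2(\Z)^n\backslash\Hh^n$, and $U_1$ projects to an algebraic subvariety $W$ of $Y(1)^n$ of degree bounded solely in terms of $\deg V$ and $n$ (because $W=j(U_1)$ is a component of the image of $V\cap\Gamma$, and the modular relations have bounded complexity; this is where the hypotheses fixed in \S3 are used). Since $\gamma\FF^n$ maps homeomorphically (generically) onto $Y(1)^n$ and the covering map $\Hh^n\to Y(1)^n$ is a local isometry, $\mathrm{Vol}(\gamma\cdot U_1\cap\FF^n)\le \mathrm{Vol}(W)\cdot(\text{multiplicity of the cover over }W)$, and the latter is again controlled. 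Thus the bound is
$$\mathrm{Vol}(\gamma\cdot U_1\cap\FF^n)\le C(n,\deg V),$$
uniformly in $\gamma$.

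The main obstacle I anticipate is the bookkeeping of \emph{degrees}: one must check that the degree of the image subvariety $j(U_1)\subset Y(1)^n$ — or of a suitable algebraic model of $U_1$ inside $(\PP^1)^n$ obtained after choosing appropriate coordinates — is bounded in terms of $\deg V$ alone, not in terms of $\gamma$. Since $\gamma$ acts by algebraic automorphisms of $(\PP^1)^n$ (fractional linear transformations), $\gamma\cdot V$ has the same degree as $V$, so the degree of the relevant piece of $\gamma\cdot U_1$ is stable; the subtlety is only that $U_1$ itself is a complex-analytic (not algebraic) object, so one passes through its Zariski closure in $(\PP^1)^n$ — whose degree is bounded by B\'ezout against the algebraic locus $V$ and the (fixed) graph data — before applying Hwang–To. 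Once that degree bound is in hand, the volume bound is immediate from \cite{HT} together with the observation that $\FF^n$ has finite hyperbolic volume, and I would not expect further difficulties. A secondary, more elementary route, avoiding Hwang–To, is to note that $U_1$ has dimension $d:=\dim U_1$ and to bound its volume in $\FF^n$ directly by the Cauchy–Crofton / Lelong formula applied to the projective degree of $\overline{U_1}$, again using only that degrees are $\gamma$-invariant; this gives the same conclusion with an explicit constant.
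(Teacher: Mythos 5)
Your proposal rests on a premise that fails: you assume that $j(U_1)$ (equivalently, the projection of $V\cap\Gamma$ to $Y(1)^n$) is an algebraic subvariety $W$ of degree controlled by $\deg V$, and alternatively that the Zariski closure $\overline{U_1}$ in $(\PP^1)^n$ has degree controlled by B\'ezout and can be fed into a Lelong/Crofton bound. In general neither is available. $U_1$ is a component of the projection of the analytic set $V\cap\Gamma$, and it is typically \emph{transcendental}: its image in $Y(1)^n$ is analytic but not algebraic (indeed, if it were algebraic the theorem would essentially be in its final step, handled by the monodromy argument of \S6), and its Zariski closure in $(\PP^1)^n$ generally has strictly larger dimension than $U_1$. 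Degree-based volume bounds (Wirtinger, Lelong, Cauchy--Crofton) bound the volume of the \emph{whole} algebraic variety of the given degree; they say nothing about a lower-dimensional analytic piece sitting inside it, so the ``secondary route'' does not close the gap either. A further misattribution: the Hwang--To estimate cited in the paper is a \emph{lower} bound on volume growth in hyperbolic balls (it is used later, in the opposite direction, to show $U_1$ meets exponentially many fundamental domains); it is not an upper bound in terms of degree.

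The paper's proof is different in the decisive step. It bounds $\Vol(\gamma U_1\cap\FF^n)$ by noting $\gamma U_1\cap\FF^n\subset\pi_{\Hh^n}(\gamma V\cap\Gamma)\cap\FF^n$ and expanding the K\"ahler form: the volume is at most (a constant times) the sum, over coordinate subsets $I$ with $|I|=\dim U$, of the sheet numbers $\deg(\pi_I)$ of the coordinate projections of the \emph{definable analytic} set $\gamma V\cap\Gamma\cap\overline{\FF}^n$, using that $\FF$ has finite hyperbolic volume. Uniformity in $\gamma$ then comes not from any algebraic degree invariance but from o-minimality: since $j$ restricted to $\FF$ is definable, these sheet numbers are a definable, integer-valued function of $\gamma$ ranging over the definable family, hence uniformly bounded. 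This o-minimality step is exactly what replaces the algebraicity/degree control your argument needs but cannot have; without it, your bound $C(n,\deg V)$ is not justified.
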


\begin{proof}
We will prove a stronger statement. Note that
$$\gamma U_1\cap \FF^n \subset \pi_{\Hh^n}
(\gamma\cdot V\cap \Gamma)\cap \FF^n.$$
and so
$$\Vol(\gamma\cdot U_1\cap \FF^n) \leq \sum_{\substack{ I\subset\{1,\dots,n\}\\
|I|=\dim U}} \deg(\pi_I)$$
where $\pi_I$ is the degree of the projection onto the $I$'th co-ordinates of
$$\gamma\cdot V\cap \Gamma\cap \overline{\FF}^n.$$
But now we can define this for any $\gamma\in Z$, and since this degree
is a definable function taking integer values it is uniformly bounded.
This completes the proof.
\end{proof}

Let $p\in U_1$ be a fixed point, and consider the geodesic ball $B(p,r)$
around $p$ of radius $r$
for the hyperbolic metric. By a theorem of Hwang-To \cite{HT} the volume of
$U_1\cap B(p,r)$ grows at least like $c_1c_2^r$, for constants
$c_1>0$ and $c_2>1$. In view of the previous
Lemma, $U_1\cap B(p,r)$ must intersect exponentially (in $r$) many
fundamental domains, all of which have height exponential (in $r$).
It follows by Proposition 5.2 of \cite{P1} that
$Z$ contains polynomially many integer points
(meaning that the number up to height $T$ is $\gg T^\delta$ for some positive
$\delta$ and implied constant). Therefore, by
the Counting Theorem \cite{PW},
we learn that $Z$ contains semi-algebraic curves.
Let $C_{\R}$ be such a curve, with $C$ the complex algebraic
curve containing $C_{\R}$. For each $c$ in an open neighborhood
containing $C_{\R}$, let $U_c$ denote a
component of $c\cdot V\cap \Gamma$ such that $\dim U_c=\dim U$
and $(\pi_{\Hh^n}U_c)^{\textrm{Mob}} = U_1^{\textrm{Mob}}$.
Let $c_0\in C_{\R}$.
We now have three cases:

\begin{itemize}

\item[(i)] $c\cdot V$ is not independent of $c\in C_{\R}$, and $c\cdot V$
does not contain
$U_{c_0}$ for all $c\in C_{\R}$. Then we may take $V' = C\cdot V$ to
get a new pair $V', U'$ where $U'$ is a component of $\cup_{c\in C} U_c$ increasing the dimension of each of $U$ and $V$ by one.

\item[(ii)] $c\cdot V$ is not independent of  $c\in C$ but $c\cdot V$ contains $U_{c_0}$ for all
$c\in C$. Then we replace $V, U$ by $V'=V\cap c\cdot V, U'=U_{c_0}$
for a generic $c\in C$,
which reduces the dimension of $V$ by one but leaves the dimension of $U$ unchanged.

\item[(iii)] $c\cdot V$ is independent of $c\in C$. Then $C_{\R}$ is contained in a right coset of $S$.

\end{itemize}

\medskip
\noindent
{\it Start of proof of Theorem 1.2.\/}
We argue by induction (upwards) on $\delta=\dim V-\dim U$, the case
$\delta=0$ being ``Ax-Lindemann'' with derivatives \cite{P2}.
For a given $\delta$ we argue by induction (downwards) on $\dim U$,
the case $\dim U=\dim\Gamma=3n$ being obvious.
If $C\subset Z$ is a semi-algebraic curve as above,
we either reduce $\delta$ if we are in case (ii),
or increase $\dim U$ maintaining the value of $\delta$
if we are in case (i), or find ourselves in case (iii).

Therefore, by induction, we may assume from now on that
we are always in case (iii), and that the conclusion of Lemma 3.1 holds.
Then all maximal algebraic sets in $Z$
having integer points are
right cosets of $S$.
By the Counting Theorem there are fewer than
polynomially many  of these cosets which contain integer points
(or we would enlarge $S$); therefore $S(\Z)$ must contain
polynomially many points, and so $G$ has positive dimension.
The next section deals with this case.

\section{Group Theory: Splitting of co-ordinates}
Throughout this entire section we assume we are in case (iii)
from section 4.
Consider the Lie Algebra $L(G)$. Let the semisimple part be $L(G)_{ss}$,
which must be isomorphic to $\sl_2^k$. Note that these $\sl_2$'s are
defined over $\Q$ as the $\Q$-points are Zariski dense in $G$.
Next set $\n$ to be the unipotent radical of $L(G)$.
Consider the projection maps $\pi_i:L(G)\rightarrow \sl_2$.

\begin{lemma}
For each $i$, $\pi_i(\n)$ and $\pi_i(L(G)_{ss})$ are not both non-zero.
In particular, $\n$ and $L(G)_{ss}$ commute.
\end{lemma}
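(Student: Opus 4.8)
The plan is to analyze the structure of $L(G)$ as a subalgebra of $\sl_2^n$ and exploit the fact that $G$ is the Zariski closure of $S(\Z)$, where $S$ is the stabilizer of $V\subset J_2\Hh^n\times J_2Y(1)^n$. First I would recall the abstract structure theory: writing $L(G)=\n\rtimes L(G)_{ss}$ with $\n$ the unipotent radical and $L(G)_{ss}\cong\sl_2^k$ semisimple, each projection $\pi_i:L(G)\to\sl_2$ (to the $i$'th factor) sends $\n$ to a solvable ideal of its image and $L(G)_{ss}$ to a semisimple subalgebra. Since any subalgebra of $\sl_2$ is either $0$, a Borel (the full upper-triangular, solvable), a Cartan, a nilpotent line, or all of $\sl_2$, the image $\pi_i(L(G))$ is one of these. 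If $\pi_i(L(G))=\sl_2$, then since $\sl_2$ is simple its only semisimple quotient-compatible decomposition forces $\pi_i(\n)$ (being the image of the solvable radical, hence contained in the solvable radical of $\sl_2$, which is $0$) to vanish; and if $\pi_i(L(G))$ is a proper subalgebra it is solvable, so $\pi_i(L(G)_{ss})$, being semisimple inside a solvable algebra, must be $0$. Either way $\pi_i(\n)$ and $\pi_i(L(G)_{ss})$ are not both nonzero, which is the first assertion.

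For the ``in particular'' clause, I would argue that $\n$ and $L(G)_{ss}$ commute as subalgebras of $\sl_2^n=\bigoplus_i\sl_2$. Fix $i$. By the first part, in the $i$'th coordinate at least one of $\pi_i(\n)$, $\pi_i(L(G)_{ss})$ is zero, so the brackets $[\pi_i(\n),\pi_i(L(G)_{ss})]$ vanish coordinate-wise. Since the bracket on $\sl_2^n$ is computed coordinate-wise, $[\n,L(G)_{ss}]$ projects to $0$ in every factor, hence is $0$. Thus $\n$ and $L(G)_{ss}$ centralize each other, giving $L(G)=\n\times L(G)_{ss}$ as a direct product of ideals (in particular the semidirect product is direct).

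I would double-check two points that are the only real content beyond bookkeeping. One is the classification of subalgebras of $\sl_2(\C)$: this is standard (up to conjugacy $0$, the strictly upper triangular $\C e$, the diagonal Cartan $\C h$, the Borel $\C h\oplus\C e$, and $\sl_2$ itself), and the key features I need are that every proper subalgebra is solvable and that $\sl_2$ itself has trivial solvable radical. The second is the compatibility of the Levi decomposition of $L(G)$ with the projections $\pi_i$: $\pi_i$ being a Lie algebra homomorphism, $\pi_i(\n)$ lies in the radical of $\pi_i(L(G))$ — indeed it is a solvable ideal of $\pi_i(L(G))$ — so if $\pi_i(L(G))=\sl_2$ it is forced to be $0$; and $\pi_i(L(G)_{ss})$ is semisimple (a homomorphic image of a semisimple algebra), so it is $0$ whenever $\pi_i(L(G))$ is solvable. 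The rationality remark about the $\sl_2$ factors, noted in the statement, is not needed for this lemma but is recorded there for later use.

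The main obstacle, to the extent there is one, is purely organizational: making sure the Levi/radical decomposition of $L(G)$ really does behave well under each coordinate projection, i.e. that $\pi_i$ carries the unipotent radical into the (solvable) radical and the Levi factor onto a semisimple subalgebra. Once that is in hand, the argument is just the remark that $\sl_2$ admits no nonzero pair ``solvable ideal plus complementary semisimple subalgebra'' other than the trivial one, applied in each coordinate. I expect no serious difficulty and would present the proof in about half a page.
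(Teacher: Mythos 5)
Your proof is correct and follows essentially the same route as the paper: in each coordinate the image of $\n$ is a solvable (nilpotent) ideal of $\pi_i(L(G))$ while the image of $L(G)_{ss}$ is either $0$ or all of $\sl_2$, and simplicity of $\sl_2$ rules out both being nonzero, with the commutation then read off coordinate-wise. The only nitpick is that $L(G)=\n\rtimes L(G)_{ss}$ need not be the full Levi decomposition (the radical may exceed the unipotent radical), but this does not affect the argument for the stated lemma.
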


\begin{proof}
For each $i$, $\pi_i(\n)$ is either empty, or  1-dimensional, whereas
$\pi_i(L(G)_{ss})$ is either empty or all of $\sl_2$. Since $\pi_i(\n)$ is
normal in $\pi_i(L(G)_{ss})$, the claim follows.
\end{proof}

%Now, we fix the standard isomorphism $\phi:\Hh\cong \B, z\rightarrow \frac{z-i}{z+i}$ and consider $U_2:=\phi(U_1)$, and set $Fr(U_2):=\overline{U_2}\cap \delta\B^n$.
%
%\begin{lemma}
%For each $j\in I_n$, The projection of the Frontier of $U_2$,  $\pi_{j}(\Fr(U_2))$, is contained in the union $A_j$ of countable many points, with a single accumulation point $1$.
%\end{lemma}
%
%\begin{proof}
%Since the set of fundamental domains that $U_1$ intersects is contained in a finite union of $N(G)(\Z)$ orbits, it suffices to observe that for any fundamental domain $\FF\subset \Hh$, the Frontier of $$\cup_{m\in\Z}\begin{smallpmatrix} 1 & m\\ 0 & 1\end{smallpmatrix}\cdot \phi(\FF)$$ is the countably many cusps that occur together with the point $1=\phi(\infty)$.
%\end{proof}
%
%\begin{cor}
%the set $I_n$ is empty.
%\end{cor}
%
%\begin{proof}
%suppose not, and take $j\in I_n$. Pick any continuous function $f:S^1\rightarrow \R$ which vanishes at all points of $A_j$. Now by the solubility of the Dirichlet problem, extend $f$ to a function on $\B$ which is harmonic on the interior. Next, consider the function $f\circ\pi_j:U_2\rightarrow \R$. We know that $f\circ\pi_j$ is not constant by assumption that $U_2$ is not contained in a fiber (which would be a weakly special variety). But now consider the point $p\in\U_2$ at which

Let $I_{ss}$ be the set of $i$ for which $\pi_i(L(G)_{ss})$ is non-empty, $I_{n}$
be the set of $i$ for which $\pi_i(n)$ is empty, and $I_e$ be the rest of the
co-ordinates. By conjugating by elements of $\SL_2(\Z)$ we may (and do)
assume that $n$ is contained entirely in the upper triangular matrices.
Now, we study $N(G)$. For the semi-simple part note that
$\pi_{I_{ss}}(L(Z(G)))=\mathbf{1}$
 which implies that
\begin{equation}\label{selfnorm}
\pi_{I_{ss}}(L(N(G)))=\pi_{I_{ss}}(L(G)).
\end{equation}

Thus, denoting by $G_{ss}$ the connected part of the semi-simple part
of $G$, we see that $G_{ss}$ is supported only on the $I_{ss}$ co-ordinates,
and $\pi_{I_{ss}}G_{ss}$ is of finite index in its own normalizer.

For the unipotent part, note that $\pi_{I_{n}}(L(N(G)))$ is contained
in the upper triangular Borel. However, the only integral points
in the Borel are unipotent, which means that
$\pi_{I_{n}}(N(G))(\Z)$
contains the upper triangular unipotents $M(\Z)\cong \Z^{|I_n|}$
as a subgroup of finite index. Set $G_n$ to be the unipotent radical of $G$,
which we know is only supported on the $I_n$ co-ordinates.
Take $G'_n\subset 1_{I_r\cup I_{ss}}\times M(\R)_n$ to be such that
$G'_n+G_n = 1_{I_r\cup I_{ss}}\times M(\R)_n$.

\begin{lemma}
$G_n(\Z)\backslash \pi_{I_n\cup I_r}(U_1)\subset G_n(\Z)\backslash
\Hh^{I_n\cup I_r}$ is complex analytic.
\end{lemma}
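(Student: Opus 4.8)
The plan is to reduce the statement to an application of the o-minimal Chow theorem of Peterzil–Starchenko, using that $U_1$ is a definable complex analytic set that is ``periodic'' under $G_n(\Z)$. The starting observation is that, by the definition of $Z$ and the massaging of $V$ in Lemma \ref{massage1}, the set $G_n(\Z)$ stabilizes $V$ and hence (up to passing to the relevant component) carries $U$ to itself; by Proposition \ref{orbitdomain} the fundamental domains through which $U_1$ passes form finitely many $N(G)(\Z)$-orbits, and in the $I_n\cup I_r$ coordinates the relevant part of the normalizer is, up to finite index, exactly $G_n(\Z)\cong\Z^{|I_n|}$ acting by the unipotent translations $M(\Z)$. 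So $\pi_{I_n\cup I_r}(U_1)$, intersected with a fundamental domain for $G_n(\Z)$ in $\Hh^{I_n\cup I_r}$ — namely a ``vertical strip'' in the $I_n$ coordinates times the whole $\Hh^{I_r}$ in the remaining ones — is the image of a definable set under the restriction of the $j$-like uniformization, hence is definable in $\R_{\mathrm{an},\exp}$.

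Concretely, I would argue as follows. First, spell out that the quotient map $q:\Hh^{I_n\cup I_r}\to G_n(\Z)\backslash\Hh^{I_n\cup I_r}$ restricts, on an appropriate fundamental domain $\Sigma$ (a product of horizontal strips $\{0\le \mathrm{Re}\,z<1\}$ in the $I_n$-coordinates with $\Hh^{I_r}$), to a biholomorphism onto its image, and that on $\Sigma$ the composite is definable: the strip maps definably to the punctured disc via $z\mapsto e^{2\pi i z}$ on each $I_n$-factor, and on the $I_r$-factors we are simply taking the identity, so the whole thing is semialgebraic-plus-$\mathrm{an,exp}$-definable. Next, observe that $\pi_{I_n\cup I_r}(U_1)$ is a complex analytic subvariety of $\Hh^{I_n\cup I_r}$ which is $G_n(\Z)$-invariant (this is where the conclusion of Lemma \ref{massage1} and case (iii) are used: $G_n\subset S$, so $G_n(\Z)$ preserves $V$, hence preserves $U$ up to the choice of component pinned down by the $U_1^{\mathrm{Mob}}$ condition, hence preserves $\pi_{I_n\cup I_r}(U_1)$). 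Therefore $U_1\cap q^{-1}(\text{fundamental domain})$ — equivalently $U_1$ intersected with a suitable translate of $\overline{\FF}^{I_n}\times\Hh^{I_r}$ — is, after the argument of Lemma 4.1 bounding the number of components, a \emph{finite} union of definable complex analytic pieces, and its image $G_n(\Z)\backslash\pi_{I_n\cup I_r}(U_1)$ is a definable subset of the definable manifold $G_n(\Z)\backslash\Hh^{I_n\cup I_r}$.

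Finally, I would invoke the o-minimal Chow / Peterzil–Starchenko definable-GAGA statement \cite{PS1,PS2}: a definable, complex analytic subset of a complex algebraic variety is algebraic. The quotient $G_n(\Z)\backslash\Hh^{I_n\cup I_r}$ is naturally a quasi-affine complex algebraic variety (a product of punctured discs — or rather $\G_m$'s, once one passes to the natural algebraic model — with copies of $\Hh$ or, better, one works inside a chart where it is a locally closed algebraic set), and $G_n(\Z)\backslash\pi_{I_n\cup I_r}(U_1)$ is a definable complex analytic subset of it. Hence it is complex analytic in the sense demanded — indeed algebraic — completing the proof. The main obstacle I anticipate is bookkeeping around the ``$I_r$'' coordinates: $\Hh^{I_r}$ is not itself algebraic, so to apply the algebraicity input cleanly one must either restrict attention to the $I_n$-directions (where the genuine exponential/quotient structure lives) and treat the $I_r$-directions as harmless extra algebraic parameters, or phrase everything in terms of the definable-complex-analytic conclusion only (which is all that is asserted), in which case the subtlety is just confirming that $G_n(\Z)\backslash\Hh^{I_n\cup I_r}$ has a well-defined structure of definable complex manifold on which the quotient of $U_1$ sits as a definable analytic subset; the rest is the now-standard combination of o-minimality of $j|_{\mathcal F}$ with finiteness of components.
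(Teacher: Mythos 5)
There is a genuine gap, in fact two, and they sit exactly where the difficulty of this lemma lies. First, you assert that $\pi_{I_n\cup I_r}(U_1)$ is a complex analytic (and implicitly closed) subvariety of $\Hh^{I_n\cup I_r}$ and that the lemma then follows by passing to the quotient. But the projection $\pi_{I_n\cup I_r}$ kills the $I_{ss}$ coordinates and is not proper on $U_1$, so its image need be neither closed nor locally analytic; establishing analyticity of the image \emph{is} the content of the lemma, and the paper never claims it upstairs. Instead the paper uses the $G(\Z)$-invariance of $U_1$ to write $G_n(\Z)\backslash\pi_{I_n\cup I_r}(U_1)$ as the image of a union of pieces $\pi_{I_n\cup I_r}\bigl(U_1\cap\gamma\cdot(\FF^{I_{ss}}\times\Hh^{I_n\cup I_r})\bigr)$ with $\gamma$ running over $G_n\times G_{ss}(\Z)\backslash Z(\Z)$, and the decisive point is that Proposition \ref{orbitdomain} together with equation (\ref{selfnorm}) makes this union \emph{finite}; finiteness is what yields closedness and definability of the image. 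Your appeal to ``the argument of Lemma 4.1'' (a volume/degree bound) is not the relevant finiteness statement, although you do gesture at Proposition \ref{orbitdomain} in your opening paragraph.

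Second, your closing step is circular and invokes the wrong Peterzil--Starchenko theorem. The definable Chow/GAGA statement of \cite{PS1, PS2} takes as \emph{hypothesis} a closed, definable, complex analytic subset of an algebraic variety and concludes algebraicity; it cannot be used to produce the complex analyticity that is the conclusion of this lemma. Moreover the ambient space $G_n(\Z)\backslash\Hh^{I_n\cup I_r}$ is not algebraic (you flag this ``obstacle'' but do not resolve it), and the lemma claims only analyticity --- algebraicity of $\pi_{Y(1)^n}(U_1)$ is extracted later, in Case B of Section 5, after the Hwang--To argument. What the paper actually uses here is Theorem 7.4 of \cite{PEST}, a definable-analyticity result which upgrades the closed, definable image obtained from the finiteness above to a complex analytic set. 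So the two missing ingredients are precisely the paper's two inputs: the finiteness coming from Proposition \ref{orbitdomain} and (\ref{selfnorm}), and the theorem from \cite{PEST} in place of definable Chow. The definability bookkeeping you carry out (fundamental strip for $G_n(\Z)$, $z\mapsto e^{2\pi iz}$ on the $I_n$ factors) is fine, but by itself it does not bridge either gap.
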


\begin{proof}
Since $U_1$ is $G(\Z)$ invariant, we know that
$$G_n(\Z)\backslash\pi_{I_n\cup I_r}(U_1) =
\textrm{im}\bigcup_{\gamma\in G_n\times G_{ss}(\Z)\backslash Z(\Z)}
\pi_{I_n\cup I_r}\left(U_1\cap \gamma\cdot(\FF^{I_{ss}}
\times \Hh^{I_n\cup I_r})\right).$$
By Proposition \ref{orbitdomain} and equation (\ref{selfnorm}) the RHS
is a finite union.
Therefore the image is closed, and so it is complex analytic
by Theorem 7.4 of \cite{PEST}  (and definable).
\end{proof}

\medskip
\noindent
{\it Continuation of proof of Theorem 1.2.\/} Set
$$S_n(R):=\{{z}\subset\Hh^{I_n}\mid {\rm Re}({z})\in [-R,R]^{I_n}\},$$
and let $B_r(R)\subset\Hh^{I_r}$ denote the hyperbolic ball of radius $R$
around an arbitrary fixed point, say $\vec{i}=(i,\ldots,i)$. Note that
in this ball the norms of the co-ordinates are bounded above by $e^{2R}$
while the $y$ co-ordinates are bounded below by $e^{-R}$,
so by \cite[Prop 5.2]{P1} the heights of all fundamental domains are
bounded from above by $e^{mR}$ for some fixed  $m$.

Consider $$A(R):=G(\Z)\backslash S_n(R)\times B_r(M\ln R))$$ for $M=|I_n|+1$.
Set $U_2= G_n(\Z)\backslash\pi_{I_n\cup I_r}(U_1)$. We now split into 2 cases,
depending on whether $U_2$ intersects the boundary of $A(R)$
for arbitrarily large $R$:

\textbf{Case A}: For arbitrarily large $R$,
$$U_2\cap\partial (G_n(\Z)\backslash S_n(R))\times B_r(M\ln R)\neq\emptyset.$$
In this case, since $U_2$ is connected and $G_n(\Z)\backslash\R^n$ is a
lattice partition in which one must pass at least $R$ boxes to get to the
boundary of $S_n(R$), there must be polynomially many points
$t_i\in Z\subset\SL_2(\Z)^n$ of height at most $O(R^{M+1})$ such that
the $t_i$ are in distinct $G(\Z)$-orbits, since $\pi_n(t_i)$ are in distinct
$G_n(\Z)$ orbits. However, this contradicts our present assumptions
%Proposition \ref{induction}, as
which imply that the maximal semi-algebraic sets in $Z$
are orbits of $S$. So we must be in:

\textbf{Case B}: For sufficiently large $R$,
$$U_2\cap\partial (G_n(\Z)\backslash S_n(R))\times B_r(M\ln R)=\emptyset.$$
This means that $\pi_{I_r}(U_2\cap A(R))\subset B_r(M\ln R)$
is globally analytic. Suppose $\pi_{I_r}(U_2)$ is not a point.
Applying Hwang-To,
%assuming  $\pi_{I_r}(U_2)$ is not a point,
we see that we get at least $e^{MR}$ volume, which means
that the volume of $U_2$ was at least that big, so we hit at least that many
fundamental domains. At most $R^{|I_n|}$ of them could have come from the
first co-ordinate so we get polynomially many (in $e^R$) in the
second co-ordinate.
But this is a contradiction since the group $G$ is just a torus in that co-ordinate.
Thus $\pi_{I_r}(U_2)$ is a point. This implies by our earlier assumption that the
$I_r$ co-ordinates are empty. Hence, since $U_2$ also does not intersect the
boundary of $A(R)$, we must have that $U_2$, and hence
$U_{1}$, has only finitely many `pictures' in fundamental domains.
I.e. there are finitely many fundamental domains of the form
$\gamma_i\mathcal{F}^n, \gamma_i\in {\rm SL}_2(\Z)^n, i=1,\ldots, k$
such that for any  such fundamental domain $\gamma\mathcal{F}^n$
there is $i\in\{1,\ldots, k\}$ such that
$U_{1}\cap\gamma\mathcal{F}^n=
\gamma\gamma_i^{-1}(U_{1}\cap \gamma_i\mathcal{F}^n)$.
This implies that $\pi_{Y(1)^n}(U)$
is a complex analytic and definable subset of $Y(1)^n$, an affine space.
By Peterzil-Starchenko \cite{PS1}, this means that $\pi_{Y(1)^n}(U_1)$
is an algebraic subvariety.
%By Ax-Logarithms (Proposition \ref{Axlogj}), it follows that $V$ is everything,
%proving the theorem since $\dim U + n\leq 2n= \dim V$.

\section{Monodromy and conclusion}

Now we have reduced ourselves to the case that $\pi_{Y(1)^n}U$ is algebraic.
The proof of Theorem 1.2 is therefore completed by the following result.

\begin{lemma}\label{YYY}
Theorem \ref{main} holds with the additional assumption that
$\pi_{Y(1)^n}U$ is algebraic.
\end{lemma}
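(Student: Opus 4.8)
The plan is to leverage the fact that $Y:=\pi_{Y(1)^n}(U)$ is now an irreducible algebraic subvariety of $Y(1)^n$, together with the hypothesis that $Y$ is not contained in a proper weakly special subvariety, to force $Y=Y(1)^n$; the dimension count then follows formally. First I would consider the preimage $\widetilde Y\subset\Hh^n$ of $Y$ under $j$, and its analytic components. Because $U_1\subset\widetilde Y$ and $U_1$ is not contained in a proper Mobius subvariety (by the running hypothesis on $U_1^{\mathrm{Mob}}$), the relevant component of $\widetilde Y$ is Zariski-dense in $\Hh^n$ in the appropriate sense. The key input is the monodromy theorem of Deligne--Andr\'e \cite{An,De}: the connected monodromy group of the family of elliptic curves (or rather of the local system) over the smooth locus of $Y$ acts on the relevant cohomology, and by Andr\'e's theorem this monodromy group is a normal subgroup of the derived group of the generic Mumford--Tate group of $Y$. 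Since $Y$ is not contained in any proper weakly special subvariety, its generic Mumford--Tate group is all of $\mathrm{SL}_2^n$ (up to isogeny/centre), and hence by the normality statement the connected monodromy is either trivial in a factor or the whole $\mathrm{SL}_2$ in that factor; the non-containment in a weakly special subvariety rules out the trivial and the ``diagonal'' possibilities, so the monodromy is all of $\mathrm{SL}_2^n$.

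Next I would use this big monodromy to analyze $U_1$ directly. The component of $\widetilde Y=j^{-1}(Y)$ through a point of $U_1$ is stable under the monodromy action, i.e. under a finite-index subgroup $\Delta\subset\mathrm{SL}_2(\Z)^n$ acting through deck transformations; more precisely the analytic set $\widetilde Y$ is $\mathrm{SL}_2(\Z)^n$-invariant since $Y$ is algebraic and $j$ is $\mathrm{SL}_2(\Z)^n$-invariant. The point is to transfer the ``big monodromy'' conclusion into the statement that $Y$ itself must be everything: if $Y\subsetneq Y(1)^n$ then $\dim Y<n$, and one of the coordinate projections $Y\to Y(1)$ either is dominant with positive-dimensional fibres or is non-dominant (hence constant on a component, giving a $x_\ell=c$ equation, contradicting non-containment in a weakly special subvariety), or more subtly two coordinates become ``modularly dependent''. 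The clean way to see this is: the generic Mumford--Tate group of $Y$, when $Y$ is not weakly special, equals $\mathrm{SL}_2^n$, which has dimension $3n$, but the Mumford--Tate group of a subvariety of $Y(1)^n$ of dimension $d$ has dimension at most $\ldots$ — rather, I would argue that $Y$ being cut out by its own special structure forces $\dim Y=n$ once the Mumford--Tate/monodromy group is the full product. Concretely: $\dim Y\ge \dim(\text{monodromy orbit})$ considerations combined with Ax-Lindemann (the $\delta=0$ case, already available) pin down $\dim Y=n$ and $Y=Y(1)^n$.

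Once $Y=Y(1)^n$, we have $\dim U\ge\dim\pi_{Y(1)^n}(U)=n$, and since $U\subset\Gamma$ and $\Gamma\cong J_2\Hh^n$ has dimension $3n$, while $U_1=\pi_{\Hh^n}(U)$ surjects onto a Zariski-dense subset of $\Hh^n$ (its Mobius closure being all of $\Hh^n$), one gets $\dim U=3n=\dim\Gamma$; but $U$ is a component of $V\cap\Gamma$, so $\Gamma\subset V$ near $U$, whence $\dim V\ge 3n$. Combined with the a priori bound $\dim V\le\dim U+3n$ (which holds because $\Gamma$ has dimension $3n$ inside the $6n$-dimensional ambient space and $U$ is a component of $V\cap\Gamma$, so the excess is at most $3n$) and $\dim U=3n$, we would like $\dim V=\dim U+3n$; but here one must be careful — if $U=\Gamma$ then $\dim U=3n$ and we need $\dim V=6n$, i.e. $V$ is the whole ambient space. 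Actually the correct reading is that the induction of \S4 has already reduced $\dim U$ as far down as possible, so I expect that in this final lemma $\dim U$ need not be $3n$; rather, with $\pi_{Y(1)^n}(U)$ algebraic one re-runs the dimension bookkeeping: $\dim V-\dim U$ is computed via the fibration of $V$ over $V\cap\Gamma$ and the algebraicity of the image makes the projection to $Y(1)^n$ behave dimension-additively. \textbf{The main obstacle} I anticipate is precisely this last step: extracting the exact equality $\dim V=\dim U+3n$ (rather than an inequality) from the algebraicity of $\pi_{Y(1)^n}(U)$ plus the monodromy conclusion, i.e. showing there is no ``extra'' drop in dimension hiding in the $J_2$-directions. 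This requires carefully using that none of the projections of $U$ to $J_2\Hh$ lie in $J^0_2\Hh$ (so the ``$w_i\neq 0$'' genericity persists) together with the explicit ODE $F(j,j',j'',j''')=0$ of \S2.4 to control the fibres of $\Gamma\to Y(1)^n$, matching them against the fibres of $V$.
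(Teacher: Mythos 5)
There is a genuine gap, and it occurs at the very first step of your plan. The hypothesis that $Y=\pi_{Y(1)^n}(U)$ is not contained in a proper weakly special subvariety does \emph{not} force $Y=Y(1)^n$: a generic hypersurface, or a generic curve in $Y(1)^2$, is contained in no proper weakly special subvariety yet is a proper subvariety. Big monodromy likewise does not force $Y$ to be everything — a non-weakly-special curve in $Y(1)^2$ already has monodromy Zariski dense in $\SL_2(\C)^2$. So the chain ``not weakly special $\Rightarrow$ Mumford--Tate group is $\SL_2^n$ $\Rightarrow$ $\dim Y=n$ and $Y=Y(1)^n$'' breaks down, and with it the ensuing dimension count ($\dim U=3n$, etc.), which you yourself flag as unresolved; that unresolved ``main obstacle'' is in fact the whole content of the lemma, so the proposal does not close.

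The paper's argument never tries to identify $Y$. After reducing to $V=U^{\rm zar}$ (legitimate, since $U^{\rm zar}\subset V$ and the trivial analytic intersection bound gives $\dim V\le\dim U+3n$ anyway), it applies Andr\'e--Deligne to the \emph{algebraic} variety $Y$ to conclude that the monodromy of $\pi_{\Hh^n}U$ is Zariski dense in $\SL_2(\C)^n$; this monodromy acts by deck transformations preserving $U$ (the passage from $Y$ to $\pi_{J_2Y(1)^n}U$ only changes the fundamental group by finite index), so $V=U^{\rm zar}$ is invariant under all of $\SL_2(\C)^n$ acting on the $J_2\Hh^n$ factor. The punchline is group-theoretic, not a statement about $Y$: $\SL_2(\C)$ acts transitively on $J_2\PP^1\setminus J_2^0\PP^1$, and the hypothesis that no projection of $U$ to $J_2\Hh$ lies in $J_2^0\Hh$ (the point where your ``$w_i\neq0$'' concern is actually used) then shows every fiber of $V\to J_2Y(1)^n$ through $U$ is $3n$-dimensional. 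Since $U\subset\Gamma$ is a graph, $\dim\pi_{J_2Y(1)^n}U=\dim U$, so $\dim V\ge\dim U+3n$, and equality follows. In short: instead of trying to make the image maximal, make $V$ invariant under the full group and measure the fibers.
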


\begin{proof}

We can clearly assume that $V=U^{\rm zar}$.
By a theorem of Andr\'e and Deligne \cite{An, De},
since $\pi_{Y(1)^n}U$ is not contained in a weakly special subvariety,
the monodromy of $\pi_{\Hh^n}U$ is Zariski dense in $\SL^n_2(\C)$.
This implies that the same is true for $\pi_{J_2Y(1)^n}U$, since the
image of the fundamental group induced by the projection map from
$\pi_{J_2Y(1)^n}U$ to $\pi_{Y(1)^n}U$ has finite index.

This means that $U$ is invariant under a Zariski dense subgroup of
$\SL_2^n(\C)$, and thus that $V=U^{\rm zar}$ is invariant under all of
$\SL_2^n(\C)$.

Finally, observe that $$J_2\PP^1\setminus J_2^0\PP^1$$
is acted on transitively by  $\SL_2\C$.
This implies that the fibers of the natural projection map $V\rightarrow J_2Y(1)^n$
are $J_2\C^n$, and so $3n$ dimensional, which implies the claim.
\end{proof}

\section{Application to Zilber-Pink}

A {\it special subvariety\/} of $Y(1)^n$ is a weakly
special subvariety (as defined in \S2.3) in which the constant $c$ in any
defining equation of the form $x_\ell=c$ is a {\it singular modulus\/},
i.e. the $j$-invariant of a CM elliptic curve.

\medskip

\begin{definition*}
Let $W\subset Y(1)^n$. An {\rm atypical subvariety\/} of $V$
is a component $A$ of some $W\cap T$, where $T$ is a special subvariety,
such that $\dim A >\dim W+\dim T-\dim X$.
\end{definition*}

\begin{conj}[Zilber-Pink conjecture for $Y(1)^n$]
Let $W\subset Y(1)^n$. Then $W$ contains
only finitely many maximal atypical subvarieties.
\end{conj}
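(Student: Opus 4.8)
The strategy is the Pila--Zannier method, now powered by the Ax--Schanuel theorem (Theorem \ref{main}) proved in the body of the paper. Fix $W\subset Y(1)^n$ and a maximal atypical subvariety $A$ of $W$; we must bound the height of the ``generic'' data defining $A$ and then count. The first step is a reduction: an atypical component $A$ of $W\cap T$ for a special $T$ is, in particular, an atypical component for the smallest special subvariety $T_A$ containing $A$; and since $T_A$ is cut out by finitely many modular equations $\Phi_N(x_h,x_k)=0$ and finitely many equations $x_\ell=c_\ell$ with $c_\ell$ singular moduli, one partitions the problem according to which coordinates are ``constant'' and which are ``$\Phi_N$-linked''. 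On the constant coordinates $A$ lives inside $\{x_\ell = c_\ell\}$; the $c_\ell$ are CM points, and the key arithmetic input is that a singular modulus of discriminant $d$ generates a field of degree $\gg |d|^{1/2-\epsilon}$ over $\Q$ (Siegel/Brauer), so its Galois conjugates are numerous. This is the large Galois orbits ingredient, and in the $Y(1)^n$ setting it is \emph{unconditional}.

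The second step is the functional transcendence heart. Lift $W$ to $\Hh^n$: let $\widetilde W$ be a component of $\pi^{-1}(W)$ and consider $\widetilde W\cap \Gamma\subset \Hh^n\times Y(1)^n$ (using Theorem \ref{main0}, the $k=0$ version of Theorem \ref{main}). An atypical intersection of $W$ with a special $T$ lifts to an atypical intersection of $\widetilde W$ with a weakly special $\widetilde T$ (a Mobius-type subvariety of $\Hh^n$ defined over $\Q$), and Theorem \ref{main0} tells us precisely that such atypical components have projection to $Y(1)^n$ contained in a \emph{proper} weakly special subvariety --- equivalently, the pair $(\widetilde W, \widetilde T)$ is ``degenerate'' in a controlled way. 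One uses this to set up a definable family: let $D$ be the (definable, via Peterzil--Starchenko) set of $g\in\SL_2(\R)^n/\SL_2(\Z)^n$-type parameters, or more precisely the set in an appropriate definable moduli space parameterising pairs (weakly special $\widetilde T$ defined over $\Q$, component of $\widetilde W\cap\widetilde T$ of atypically large dimension). Theorem \ref{main0} forces every such component to be ``special-governed'', and o-minimality gives that $D$ is definable.

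The third step is counting. Maximal atypical $A$'s correspond to rational points (encoding the defining $\Phi_N$'s, i.e. integers $N$, and the CM discriminants $d_\ell$) of bounded format lying on the definable set $D$; by the large Galois orbit bound, each $A$ drags along $\gg H^\delta$ Galois-conjugate rational points of comparable height $H$. If there were infinitely many maximal atypical $A$'s one would get, after fixing the combinatorial type, polynomially many rational points of height $\le H$ on $D$. The Counting Theorem of Pila--Wilkie \cite{PW} then produces positive-dimensional semialgebraic (indeed, by a standard block argument, semialgebraic-\emph{algebraic}) families inside $D$; but Theorem \ref{main0} / the Ax--Lindemann content \cite{P1} implies that any such positive-dimensional algebraic family of atypical data is itself contained in a single larger atypical (or special) subvariety, contradicting the maximality of the $A$'s --- this is the ``blocks are special'' dichotomy familiar from André--Oort. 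The main obstacle is precisely this last comparison: one must verify that an \emph{algebraic} family of maximal atypical subvarieties forces a common containing special subvariety, which requires combining the Ax--Schanuel statement with a monodromy argument (as in \S6, via Deligne--André \cite{An, De}) to control how the weakly special $\widetilde T$'s vary in the family. I would expect the bookkeeping over the finitely many combinatorial types (which coordinates constant, which linked, and the induction on $n$ when $W$ itself turns out to be contained in a proper special subvariety) to be the most delicate routine part, but the genuine mathematical content is the Ax--Schanuel $\Rightarrow$ ``blocks are special'' implication, which is what Theorem \ref{main} is designed to supply.
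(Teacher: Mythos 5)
The statement you set out to prove is labelled a \emph{Conjecture} in the paper, and it remains one: the paper does not prove it, and explicitly records (following \cite{HP2}) that it would follow from Ax--Schanuel \emph{together with} a conjectural lower bound for certain Galois orbits. Your proposal founders on that second ingredient. You assert that the large-Galois-orbit input is unconditional for $Y(1)^n$ because a singular modulus of discriminant $d$ has degree $\gg |d|^{1/2-\epsilon}$ over $\Q$ by Siegel/Brauer. That bound handles only the constant (CM) coordinates, i.e.\ the Andr\'e--Oort part of the problem. The hard arithmetic in Zilber--Pink concerns atypical components that are points (or have some constant coordinates) lying on special subvarieties cut out by modular relations $\Phi_N(x_h,x_k)=0$: there one must bound the Galois orbit of the point from below in terms of the \emph{complexity} of the optimal special subvariety containing it (essentially in terms of the integers $N$), and no such unconditional bound is known. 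Already for a curve $W\subset Y(1)^3$, finiteness of the points of $W$ satisfying two independent modular relations is exactly the situation of \cite{HP1}, which required extra hypotheses on $W$; the general case is open. So step one of your plan does not go through, and the counting in step three (which needs $\gg H^{\delta}$ conjugates of comparable height and complexity to feed into Pila--Wilkie) has nothing to work with.

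What the paper actually proves unconditionally (Theorem 7.2) is finiteness of maximal \emph{strongly atypical} subvarieties, i.e.\ those on which no coordinate is constant. This restriction excludes every zero-dimensional atypical component and with it the entire Galois/counting apparatus: the proof is a short definability argument. One considers the definable family of Mobius subvarieties $M$ of $\Hh^n$ meeting $Z=\pi^{-1}(W)\cap\FF^n$ optimally and with no constant coordinate; Ax--Schanuel (in the form of 5.12 of \cite{HP2}) forces such $M$ to be weakly special, and having no constant coordinates they are defined purely by ${\rm GL}_2^+(\Q)$ conditions, so they range over a countable --- hence, being a definable family, finite --- set. No Pila--Wilkie counting, no Galois orbits, no monodromy. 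If you want an unconditional statement, that is the one to aim for; the full conjecture is out of reach by your method until the requisite Galois orbit lower bounds are established.
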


It is shown in \cite{HP2}
that this conjecture would follow from two statements. The first is (a less
general form of) the Ax-Schanuel statement proved here.
The second is a conjectural lower bound for certain Galois orbits.
The purpose of this section is to show that Ax-Schanuel directly implies
a partial result on the conjecture, without any assumptions on Galois orbits.

\begin{definition*}

An atypical subvariety $A\subset W$ is called {\rm strongly atypical\/}
if no coordinate is constant on $A$.

\end{definition*}

\begin{thm}
Let $W\subset Y(1)^n$. Then $W$ contains only finitely many
maximal strongly atypical subvarieties.
\end{thm}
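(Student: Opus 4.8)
The plan is to deduce the finiteness of maximal strongly atypical subvarieties from the Ax-Schanuel theorem (Theorem \ref{main}) via the point-counting strategy, exactly in the spirit of how Pila and others have extracted Zilber-Pink-type consequences from functional transcendence. First I would set up the relevant parameter space. A special subvariety $T\subset Y(1)^n$ in which no coordinate is constant is a component of a variety cut out by modular equations $\Phi_{N_{hk}}(x_h,x_k)=0$; its preimage in $\Hh^n$ is a Mobius subvariety $\widetilde T$ defined by relations $z_h=g_{hk}z_k$ with $g_{hk}\in\GL_2^+(\Q)$, and up to the $\SL_2(\Z)^n$-action these are parameterized by a definable family, with the arithmetic complexity (the $N_{hk}$, equivalently the heights of the $g_{hk}$) playing the role of ``height'' of $T$. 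I would fix the defining data of $W$, let $\widetilde W\subset\Hh^n$ be a component of its preimage, and for each relevant pattern of coordinate-pairings consider the definable set of $(z,g)$ with $z\in\widetilde W\cap\FF^n$ and $z$ lying on the translate $\widetilde T_g$ of the appropriate Mobius variety; the rational points of this set of large height correspond to the strongly atypical intersections.

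The heart of the argument is the dichotomy coming from the Counting Theorem of Pila-Wilkie \cite{PW}: either the rational points of height $\le H$ in this definable set number $O(H^\epsilon)$, or there is a positive-dimensional semialgebraic (indeed, by further analysis, semi-algebraic hence real-algebraic) set through which many of them pass. In the first case, a Galois/conjugation argument shows that an atypical $A$ of large complexity would force more rational points than the $O(H^\epsilon)$ bound allows, so only finitely many $A$ can have complexity exceeding some bound; the bounded-complexity ones lie in finitely many families and standard arguments (noetherianity of $W$, plus induction on $n$ to handle the non-maximal ones inside proper special subvarieties) finish that part. In the second case, the algebraic block $B\subset\widetilde W\times(\text{parameter space})$ produced by Pila-Wilkie feeds directly into Ax-Schanuel: projecting $B$ appropriately and using that the relevant graph $\Gamma$ is involved, Theorem \ref{main} (applied in the $j$-only form, Theorem \ref{main0}) forces the image of $B$ in $Y(1)^n$ to be contained in a proper weakly special subvariety, \emph{unless} a dimension count is met; tracking dimensions, this says that an infinite family of maximal strongly atypical $A$ cannot arise from a single block. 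Iterating, one concludes the $A$ are confined to finitely many proper weakly special (in fact special, after accounting for the arithmetic) subvarieties of $W$, and then induction on the ambient dimension $n$ applies the theorem to each of these.

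I would organize the induction on $n$: for $n=1$ the statement is essentially vacuous (the only special subvarieties with no constant coordinate are $Y(1)$ itself, giving no atypicality), and for the inductive step one handles separately (a) strongly atypical $A$ that are ``optimal'' in the sense of not being contained in a larger atypical subvariety of the same ``defect,'' which are governed directly by the Ax-Schanuel dimension inequality, and (b) those contained in a proper special $T$ with no constant coordinate, where $W\cap T\subset T\cong Y(1)^m$ for $m<n$ and the inductive hypothesis applies. The strong atypicality hypothesis (no coordinate constant on $A$) is exactly what lets us stay within the ``no constant coordinate'' world of Mobius subvarieties defined over $\Q$, so that the relevant group is $\GL_2^+(\Q)^n$ and the parameter count is the clean one; it also sidesteps the need for lower bounds on Galois orbits of singular moduli, which is where the full conjecture gets stuck.

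The main obstacle I anticipate is the block-to-weakly-special step: extracting from a positive-dimensional semialgebraic block in the parameter space the conclusion that the projection to $Y(1)^n$ lands in a proper weakly special subvariety, with the dimension bookkeeping done carefully enough that one genuinely rules out infinite families rather than merely one intersection at a time. This requires setting up the ``atypicality'' inequality $\dim A>\dim W+\dim T-n$ so that it translates precisely into the negation of the Ax-Schanuel equality $\dim V=\dim U+n$ after passing to graphs, and then arguing that maximality of $A$ plus the block structure produces such a violation unless $A$ sits inside a smaller special subvariety — at which point the induction takes over. The counting input (ensuring polynomially many rational points when $A$ has large complexity) is routine given \cite{P1, PW}, and the definability of all the sets involved follows from \S2.2 together with the definable family of Mobius subvarieties, so those are not where the difficulty lies.
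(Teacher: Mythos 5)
Your outline is not the paper's argument, and it contains a genuine gap at its central step. You propose the Pila--Wilkie dichotomy and, in the ``few rational points'' horn, you assert that ``a Galois/conjugation argument shows that an atypical $A$ of large complexity would force more rational points than the $O(H^\epsilon)$ bound allows.'' For strongly atypical intersections this lower bound is exactly the arithmetic input that is \emph{not} available: the special subvariety $T$ is cut out by modular polynomials and is already defined over $\Q$, so Galois conjugation does not multiply it, and a single atypical component $A\subset W\cap T$ need only produce a bounded number of matrices $g\in\GL_2^+(\Q)$ (equivalently, rational points of the parameter set) when lifted to $\FF^n$ --- there is no automatic ``polynomially many points of height comparable to the complexity of $T$.'' In the framework of \cite{HP2} this is precisely the conjectural large-Galois-orbit/point-counting lower bound that makes the full Zilber--Pink statement conditional; the whole point of the present theorem is that it is proved \emph{without} any such assumption, so a strategy whose unbounded-complexity case rests on that kind of estimate cannot establish it. Your claim that strong atypicality ``sidesteps'' the Galois input is therefore not borne out by your own first horn, and the second horn (blocks feeding into Ax--Schanuel) is only sketched at the level of ``tracking dimensions,'' so it cannot absorb the missing case.

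The paper's proof is much softer and uses no counting at all. One passes to $Z=\pi^{-1}(W)\cap\FF^n$ and considers the \emph{definable family} of all Mobius subvarieties $M$ of $\Hh^n$; inside it one defines the subfamily of those $M$ that intersect $Z$ optimally (the defect $\dim M-\dim(M\cap Z)$ cannot be preserved by any strictly larger $M$) and for which $M\cap Z$ has no constant coordinate. By Ax--Schanuel (in the form of 5.12 of \cite{HP2}) every such optimal $M$ is weakly special, and having no constant coordinates it is in fact defined purely by $\GL_2^+(\Q)$ conditions; these form a countable collection, and a definable subset of the parameter space contained in a countable collection is finite by o-minimality. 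Maximal strongly atypical subvarieties of $W$ arise from this finite list, giving the theorem. If you want to keep a counting-flavoured proof you would have to supply the missing lower bound, which is not known; otherwise you should replace the dichotomy by this definability-plus-countability argument.
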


\begin{proof}
For a subvariety $A\subset Y(1)^n$ denote by $\langle A\rangle$ the
smallest special subvariety containing $A$, and denote by
$\delta(A)=\dim\langle A\rangle-\dim A$ the {\it defect\/} of $A$.
For $W\subset Y(1)^n$, we call $A\subset W$ an {\it optimal subvariety\/}
if $\delta(B)<\delta(A)$ for all strictly larger $B\supset A$ with $B\subset W$.
As shown in \cite{HP2}, the Zilber-Pink conjecture for $Y(1)^n$ is
equivalent to the assertion that, for $W\subset Y(1)^n$, $W$
contains only finitely many optimal subvarieties.

Now let $W\subset Y(1)^n$. We consider the uniformization
$\pi: \Hh^n\rightarrow Y(1)^n$ and the definable set
$Z=\pi^{-1}(W)\cap\mathcal{F}^n$. An atypical intersection $W\cap T$ gives
rise to an atypical intersection of $Z$ with a weakly special subvariety
of $\Hh^n$. We consider the definable family of Mobius subvarieties.
Among these we can define the subset of $M$
which intersect optimally (in the sense $\dim M-\dim (M\cap Z)$
can't be maintained for any strictly larger $M$) and
with $M\cap Z$ having no constant coordinate (which is
preserved under taking larger $M$). By Ax-Schanuel
in the form of 5.12 of \cite{HP2}, such $M$ are weakly special.
As they have no constant coordinates
they are in fact special, defined only by ${\rm GL}_2^+(\Q)$ conditions,
and the definable subset of these must be a finite set.
\end{proof}

We remark that the corresponding statement for the Andr\'e-Oort conjecture
(``A subvariety $W\subset Y(1)^n$ contains only finitely many
strongly special subvarieties'') follows in a similar way from the
Ax-Lindemann theorem for $Y(1)^n$ in \cite{P1},
though this was not highlighted there.

\bigskip
\noindent
{\bf Acknowledgements.\/} The authors thank Gareth Jones for a correction.
JP  thanks the EPSRC for supporting
his research under a grant entitled ``O-minimality and diophantine geometry'',
reference EP/J019232/1, and JT thanks NSERC for the support of his research
through a discovery grant.


\begin{thebibliography}{100}

\bibitem{An} Y. Andr{\'e}, Mumford-Tate groups of
mixed Hodge structures and the  theorem of the fixed part,
{\it Compositio Math.\/} {\bf 82} (1992), no. 1,  1--24.

\bibitem{Ax1} J. Ax, On Schanuel's
conjectures, {\it Annals\/} {\bf 93\/} (1971), 252--268.

\bibitem{Ax2} J. Ax, Some topics in differential algebraic geometry I:
Analytic subgroups of algebraic groups, {\it Amer. J. Math.\/}
{\bf 94} (1972), 1195--1204.

\bibitem{BMPZ} D. Bertrand, D. Masser, A. Pillay, and U. Zannier,
Relative Manin-Mumford for semi-abelian surfaces,
{\it Proc. Edinburgh Math. Soc.\/}, to appear.

\bibitem{BZ} D. Bertrand and W. Zudilin,
On the transcendence degree of the differential field generated by
Siegel modular functions, {\it J. reine angew. Math.\/} {\bf 554} (2003), 47--68.

\bibitem{BMZ} E. Bombieri, D. Masser, U. Zannier,
Anomalous subvarieties - structure theorems and
applications, {\it IMRN\/} (2007), 1--33.

\bibitem{De} P. Deligne, Th\'eorie de Hodge. II,
{\it Publications  Math{\'e}matiques de l'IH{\'E}S\/} {\bf 40} (1971), 5--57.

\bibitem{D} L. van den Dries,
{\it Tame topology and o-minimal structures},
London Mathematical Society Lecture Note Series {\bf 248\/},
CUP, Cambridge, 1998.

\bibitem{DMM}
L.~van~den Dries, A.~Macintyre, and D.~Marker,
{The elementary theory of restricted analytic fields with exponentiation},
{\it Ann. of Math. (2)\/} {\bf 140} (1994), 183--205.

\bibitem{DM}
L.~van~den Dries and C.~Miller,
On the real exponential field with  restricted analytic functions,
{\it Israel J. Math.\/} {\bf 85} (1994), no.~1-3,  19--56.

\bibitem{HP1} P. Habegger and J. Pila,
Some unlikely intersections beyond Andr\'e-Oort,
{\it Compositio Math.\/} {\bf 148} (2012), 1--27.

\bibitem{HP2} P. Habegger and J. Pila,
O-minimality and certain atypical intersections,
preprint 2014.

\bibitem{HT} J.-M. Hwang and W.-K. To,
Volumes of complex analytic subvarieties of
Hermitian symmetric spaces,
{\it Amer. J. of Math}, \textbf{124}, No. 6, Dec. 2002, pp. 1221-1246.

\bibitem{Ma} K. Mahler,
On algebraic differential equations satisfied
by automorphic functions, {\it J. Austral. Math. Soc.\/} {\bf 10} (1969),
445--450.

\bibitem{M} D. Masser,
Heights, transcendence, and linear Independence on
commutative group varieties,
{Lecture Notes in Mathematics\/} {\bf 1819},
Amoroso and Zannier, eds, 1-51, Springer-Verlag, Berlin, 2003.

\bibitem{MZ} D. Masser and U. Zannier,
Torsion anomalous points and families of elliptic curves,
{\it Amer. J. Math.\/} {\bf 132} (2010), 1677--1691.

\bibitem{PEST}
Y. Peterzil and S. Starchenko,
Complex analysis in a non-standard setting,
{\it Model Theory with Applications to Algebra and Analysis,\/}
Z. Chatzidakis, D. Macpherson, A. Pillay, and A. Wilkie, editors,
LMS Lecture Note Series, CUP, 2008.

\bibitem{PS1}
Y. Peterzil and S. Starchenko, Tame complex analysis and o-minimality,
{\it Proceedings of the ICM, Hyderabad 2010}.

\bibitem{PS2} Y. Peterzil and S. Starchenko,
Definability of restricted theta functions and families of abelian varieties,
{\it Duke. Math. J.\/} {\bf 162} (2013), 731--765.

\bibitem{P1} J. Pila,
O-minimality and the Andr\'e-Oort conjecture
for $\C^n$, {\it Annals\/} {\bf 173} (2011), 1779--1840.

\bibitem{P2} J. Pila
Modular Ax-Lindemann-Weierstrass with derivatives,
{\it Notre Dame J. Formal Logic\/} {\bf 54\/} (2013), 553--565.

\bibitem{PT} J. Pila and J. Tsimerman,
Ax-Lindemann for $\mathcal{A}_g$,
{\it Annals\/} {\bf 179} (2014), 659--681.

\bibitem{PW} J. Pila and A. J. Wilkie, The rational points of a
definable set, {\it Duke Math. J.\/} {\bf 133} (2006), 591--616.

\bibitem{Pn1} R. Pink, A combination of the conjectures of
Mordell-Lang  and Andr\'e-Oort, {\it Geometric methods in
algebra and number theory,\/}
F. Bogomolov, Y. Tschinkel, editors, pp 251--282,
Prog.  Math. {\bf 253}, Birkhauser, Boston MA, 2005.

\bibitem{Pn2} R. Pink,
A common generalization of the conjectures
of  Andr\'e-Oort, Manin-Mumford, and Mordell-Lang, 2005 preprint,
available from the author's webpage.

\bibitem{S1} A. Seidenberg, Abstract differential algebra and the
analytic case, {\it Proc. A. M. S.\/} {\bf 9} (1958), 159--164.

\bibitem{S2} A. Seidenberg, Abstract differential algebra and the
analytic case. II,  {\it Proc. A. M. S.\/} {\bf 23} (1969), 689--691.

\bibitem{T} J. Tsimerman, Ax-Schanuel and o-minimality,
to appear in {\it O-minimality and Diophantine geometry,\/}
A. J. Wilkie and G. O. Jones, editors, London Mathematical Society
Lecture Note Series, CUP.

\bibitem{Z} U. Zannier,
{\it Some Problems of Unlikely Intersections
in Arithmetic and Geometry,\/} with appendices by D. Masser,
{\it Annals of Mathematics Studies\/} {\bf 181}, Princeton University Press, 2012.

\bibitem{Zi} B. Zilber, Exponential sums equations and the Schanuel
conjecture, {\it J. London Math. Soc. (2)\/} {\bf 65} (2002), 27--44.

\end{thebibliography}
\end{document}